\documentclass[12pt]{amsart}

\usepackage{amssymb,latexsym}
\usepackage{amsmath,amsfonts,amsthm,amscd,amsxtra,enumerate}
\usepackage{mathtools}
\usepackage[all]{xy}
\usepackage{hyperref}
\pdfstringdefDisableCommands{

\let\noindent\empty
}
\usepackage{amsmath,latexsym,amssymb, enumerate, amsthm, epsfig }
\usepackage{bookmark}
\usepackage[margin=1in]{geometry}
\usepackage{epsfig}
\usepackage{graphicx}
\usepackage{enumerate}

\hbadness=99999
\vbadness=99999

\newtheorem{con}{Conjecture}[section]
\newtheorem{defn}[con]{Definition}

\newtheorem{thm}[con]{Theorem}
\newtheorem{lemma}[con]{Lemma}
\newtheorem{remark}[con]{Remark}

\newtheorem{proposition}[con]{Proposition}

\newtheorem*{con*}{Conjecture}
\AtBeginDocument{}
\begin{document}
\title{ Some results on the Ryser design conjecture}
\author[Tushar D. Parulekar]{Tushar D. Parulekar}
\address{Department of Mathematics, I.I.T. Bombay, Powai, Mumbai 400076.}
\email{tushar.p@math.iitb.ac.in}
\author[Sharad S. Sane]{Sharad S. Sane}
\address{Chennai Mathematical Institute, SIPCOT IT Park, Siruseri, Chennai 603103}
\email{ssane@cmi.ac.in}
\date{\today}

\subjclass[2010]{05B05;    51E05;     62K10}

\keywords{Ryser design}
\begin{abstract}
	A Ryser design $\mathcal{D}$ on $v$ points is a collection of $v$ proper subsets (called blocks) of a point-set with $v$ points such that every two blocks intersect each other in $\lambda$ points (and $\lambda < v$ is a fixed number) and there are at least two block sizes. A design $\mathcal{D}$ is called a symmetric design, if every point of $\mathcal{D}$ has the same replication number (or equivalently, all the blocks have the same size) and every two blocks intersect each other in $\lambda$ points. The only known construction of a Ryser design is via block complementation of a symmetric design. Such a Ryser design is called a Ryser design of  Type-1. This is the ground for the Ryser-Woodall conjecture: ``every Ryser design is of Type-1". This long standing conjecture has been shown to be valid in many situations.
	Let $\mathcal{D}$ denote a Ryser design of order $v$, index $\lambda$ and replication numbers $r_1,r_2$. Let $e_i$ denote the number of points of $\mathcal{D}$ with replication number $r_i$ (with $i = 1, 2$). Call $A$ small (respectively large) if $|A| < 2\lambda$ (respectively $|A| > 2\lambda$) and average if $|A|=2\lambda$. Let $D$ denote the integer $e_1 - r_2$ and let $\rho> 1$ denote the rational number $\dfrac{r_1-1}{r_2-1}$. Main results of the present article are the following.\\
	For every block $A$, $r_1 \geq |A| \geq r_2$  (this improves an earlier known inequality $|A| \geq r_2$).
	If there is no small block (respectively no large block) in $\mathcal{D}$, then  $D\leq -1$ (respectively $D\geq 0$). 
	With an extra assumption $e_2 > e_1$ an earlier known upper bound on $v$ is improved from a cubic to a quadratic in $\lambda$.
	It is also proved that if $v \leq \lambda^2+ \lambda + 1$ and if $\rho$ equals $\lambda$ or $\lambda - 1$, then $\mathcal{D}$ is of Type-1.
	Finally a Ryser design with $ 2^n + 1$ points is  shown to be of Type-1.
\end{abstract}
\maketitle
\section{Introduction}

\noindent

\noindent
A design is a pair $(X,L)$, where $X$ is a finite set of points and $L\subseteq P(X)$, where $P(X)$ is the power set of $X$. The elements of $X$ are called its points and the members of $L$ are called the blocks. Most of the definitions, formulas and proofs of standard results used here can be found in \cite{Ion2}.
\newpage
\begin{defn}{\rm
		A design $\mathcal{D}=(X,L)$ is said to be a symmetric $(v,k,\lambda)$ design  if
		\begin{enumerate}[{\rm 1.}]
			\item $ |X|=|L|=v$,
			\item $ |B_1\cap B_2|=\lambda \geq 1$ for all blocks $B_1$ and $B_2$ of $\mathcal{D},~~ B_1\neq B_2$,
			\item $ |B|=k>\lambda$ for all blocks $B$ of $\mathcal{D}$.
		\end{enumerate}
}\end{defn}
\begin{defn}{\rm
		A design $\mathcal{D}=(X,L)$ is said to be a Ryser design of order $v$ and index $\lambda$ if it satisfies the following:
		\begin{enumerate}[{\rm 1.}]
			\item $|X|=|L|=v$,
			\item $ |B_1\cap B_2|=\lambda$ for all blocks $B_1$ and $B_2$ of $\mathcal{D}, B_1\neq B_2$,
			\item $ |B|>\lambda$ for all blocks $B$ of $\mathcal{D}$,
			\item there exist blocks $B_1$ and $B_2$ of $\mathcal{D}$ with $|B_1|\neq|B_2|$.
		\end{enumerate}
}\end{defn}
\noindent
Here condition 4 distinguishes a Ryser design from a symmetric design, and condition 3 disallows repeated blocks and also any block being contained in another block.\\
Woodall  \cite{wodl} introduced a new type of combinatorial object which is a combinatorial dual of a Ryser design. All known examples of Ryser designs can be described by the following construction given by Ryser which is also known as the Ryser-Woodall complementation.\\
Let $\mathcal{D}=(X,\mathcal{A})$ be a symmetric $(v,k,k-\lambda)$ design with $k\neq 2\lambda$. Let $A$ be a fixed block of $\mathcal{D}$.
Form the collection $\,\mathcal{B}=\{A\}\bigcup \{A\triangle B: B\in \mathcal{A}, B\neq A\}$, where $A\triangle B$ denotes the usual symmetric difference of $A$ and $B$. Then $\overline{\mathcal{D}}=(X,\mathcal{B})$ is a Ryser design of order $v$ and index $\lambda$ obtained from $\mathcal{D}$ by block complementation with respect to the block $A$. We denote $\overline{\mathcal{D}}$ by $\mathcal{D}*A$. 
Then $A$ is also a block of $\mathcal{D}*A$ and the original design $\mathcal{D}$ can be obtained by complementing $\mathcal{D}*A$ with respect to the block $A$. 
If $\mathcal{D}$ is a symmetric $(v,k,\lambda^{'})$ design, then the design obtained by complementing $\mathcal{D}$ with respect to some block is a Ryser design of order $v$ with index $\lambda=k-\lambda^{'}$. A Ryser design obtained in this way is said to be of \textbf{Type-1}.\\
Define a Ryser design to be of \textbf{Type-2} if it is not of Type-1. We now state
\begin{center}
	\textbf{The Ryser Design Conjecture \cite{Ion2}: Every Ryser design is of Type-1.}
\end{center}
\noindent
In a significant paper Singhi and Shrikhande \cite{Sin} proved the conjecture when the index $\lambda$ is a prime. In  \cite{Ser2} Seress showed the truthfulness of the conjecture for $\lambda=2p$, where $p$ is a prime. In \cite{Ion1} Ionin and Shrikhande  developed a new approach to the Ryser design conjecture that led to new results for certain parameter values. They also gave an alternate proof of the celebrated non-uniform Fisher Inequality. Ionin and Shrikhande went on to explore the validity of the Ryser design conjecture from a different perspective. Their results prove the conjecture for certain values of $v$ rather than for $\lambda$. Both Ryser and Woodall independently proved the following result:
\begin{thm}[{\cite[Theorem 14.1.2]{Ion2}}{ Ryser Woodall Theorem}]\label{thm:RyserWoodall}
	If $\mathcal{D}$ is a Ryser design of order $v$, then there exist integers $r_1$ and $r_2$, $r_1\neq r_2$ such that $r_1+r_2=v+1$ and any point occurs either in $r_1$ blocks or in $r_2$ blocks.
\end{thm}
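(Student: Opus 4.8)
The plan is to encode $\mathcal{D}$ by its $v\times v$ point–block incidence matrix $N$ (rows indexed by points, columns by blocks, with $N_{p,i}=1$ iff $p\in B_i$) and to recover the replication numbers as the diagonal entries of $NN^T$. First I would record the fundamental identity coming from condition $2$: writing $k_i=|B_i|$, the pairwise intersection condition gives $N^TN=\operatorname{diag}(k_1-\lambda,\dots,k_v-\lambda)+\lambda J$, where $J$ is the all-ones matrix. Condition $3$ (each $k_i>\lambda$) makes the diagonal part strictly positive, so $N^TN$ is positive definite, being a positive diagonal plus the positive-semidefinite rank-one term $\lambda J$; hence $N^TN$, and therefore the square matrix $N$ itself, is nonsingular. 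This nonsingularity is the only structural rigidity available, and it is what will ultimately force the replication numbers into a constrained pattern.

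Next I would set up the counting identity that links block sizes to replication numbers. Fixing a block $B_i$ and counting incident pairs $(p,B_j)$ with $p\in B_i\cap B_j$ gives $\sum_{p\in B_i} r_p=k_i+\lambda(v-1)$, where $r_p$ denotes the replication number of the point $p$; in matrix form this reads $N^T(\mathbf r-\mathbf 1)=\lambda(v-1)\mathbf 1$, with $\mathbf r=(r_p)_p=N\mathbf 1$ the vector of replication numbers. Pairing this identity with $\mathbf 1$ yields the global relation $\sum_p r_p^2-\sum_p r_p=\lambda v(v-1)$. I would stress at this point that such counting identities, being symmetric functions of the $r_p$, can never by themselves force the replication numbers to take only two values; the argument must genuinely exploit the nonsingularity of $N$.

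The heart of the proof is to upgrade this single global identity into a pointwise one: I would show that every replication number $r_p$ is a root of one fixed quadratic $x^2-(v+1)x+C=0$, after which $r_1+r_2=v+1$ is simply its sum of roots. The mechanism I would use is to conjugate: since $N\mathbf 1=\mathbf r$ and, from the previous identity, $(N^T)^{-1}\mathbf 1=\tfrac{1}{\lambda(v-1)}(\mathbf r-\mathbf 1)$, one obtains $NN^T=N\,\operatorname{diag}(k_i-\lambda)\,N^{-1}+\tfrac{1}{v-1}\mathbf r(\mathbf r-\mathbf 1)^T$. The first term is similar to $\operatorname{diag}(k_i-\lambda)$, and thus has a fully controlled spectrum, while $NN^T$ is symmetric; playing the symmetry of $NN^T$ against the constrained spectrum of the conjugated diagonal part is what should collapse the diagonal to two values. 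The hard part will be exactly that the off-diagonal entries of $NN^T$ (the numbers of blocks through two given points) are \emph{not} constant—unlike the symmetric-design case, where $NN^T=(r-\lambda)I+\lambda J$ makes everything transparent—so one cannot simply read the replication numbers off the spectrum. Equivalently, one may pass to the dual incidence structure, which is a $2$-design of index $\lambda$ with equally many points and blocks, and invoke the equality case of the non-uniform Fisher inequality; the obstruction is the same, namely showing that $b=v$ rigidifies the dual block sizes. Finally, to see that the two roots are distinct and both occur, I would note that if all $r_p$ equalled some $r$, then $\sum_{p\in B_i}r_p=k_i+\lambda(v-1)$ would force $k_i=\lambda(v-1)/(r-1)$ for every $i$, i.e. all blocks equal in size, making $\mathcal{D}$ symmetric and contradicting condition $4$.
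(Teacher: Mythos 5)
Your setup is sound and, for what it is worth, this theorem is only quoted in the paper (from Ionin--Shrikhande, Theorem 14.1.2) without a proof, so the comparison is with the standard cited proof --- which does begin exactly as you do: $N^TN=\operatorname{diag}(k_1-\lambda,\dots,k_v-\lambda)+\lambda J$ is positive definite, $N$ is nonsingular, and $N^T(\mathbf r-\mathbf 1)=\lambda(v-1)\mathbf 1$; your closing observation that a single replication value forces constant block size (contradicting condition 4) is also correct. But the heart of the theorem --- that every $r_p$ is a root of one fixed quadratic with root sum $v+1$ --- is precisely the step you announce and then leave open, and neither mechanism you gesture at can deliver it. The conjugation identity $NN^T=ND N^{-1}+\tfrac1{v-1}\mathbf r(\mathbf r-\mathbf 1)^T$ (which you state correctly) controls only the spectrum of $NN^T$ modulo a rank-one perturbation, i.e.\ symmetric-function data; as you yourself point out for the counting identities, symmetric-function information cannot localize individual diagonal entries, and the off-diagonal entries $\lambda_{pq}$ of $NN^T$ are genuinely unconstrained, so ``playing symmetry against the spectrum'' has no purchase. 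The fallback via the equality case $b=v$ of the non-uniform Fisher inequality is circular: the dual of a Ryser design is a square $\lambda$-linked design, and the assertion that $b=v$ rigidifies it into a two-value structure \emph{is} the Ryser--Woodall theorem; there is no independent off-the-shelf equality-case statement to invoke (indeed, in Ionin--Shrikhande the non-uniform Fisher inequality is \emph{derived from} the machinery below, not the other way around).

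The missing idea is to \emph{invert} the Gram matrix rather than conjugate it. Write $B=\operatorname{diag}(k_i-\lambda)$ and $s=\sum_i(k_i-\lambda)^{-1}$. Sherman--Morrison gives $(N^TN)^{-1}=B^{-1}-\tfrac{\lambda}{1+\lambda s}B^{-1}\mathbf 1\mathbf 1^TB^{-1}$, and since $N$ is square and nonsingular, $N(N^TN)^{-1}N^T=I$. Setting $w_p=\sum_{B\ni p}(|B|-\lambda)^{-1}$, this reads $NB^{-1}N^T=I+\tfrac{\lambda}{1+\lambda s}\,\mathbf w\mathbf w^T$, and the diagonal entries say that each $w_p$ satisfies the single fixed quadratic $\tfrac{\lambda}{1+\lambda s}w^2-w+1=0$, whose two roots have sum equal to product, both $=(1+\lambda s)/\lambda$. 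Your own identity $(N^T)^{-1}\mathbf 1=\tfrac{1}{\lambda(v-1)}(\mathbf r-\mathbf 1)$, combined with $(N^TN)^{-1}\mathbf 1=\tfrac{1}{1+\lambda s}B^{-1}\mathbf 1$, yields $\mathbf r-\mathbf 1=\tfrac{\lambda(v-1)}{1+\lambda s}\mathbf w$; thus $r_p-1$ is a \emph{fixed} scalar multiple of $w_p$, so $r_p$ takes at most two values $r_1,r_2$, and Vieta transfers to $(r_1-1)+(r_2-1)=\tfrac{\lambda(v-1)}{1+\lambda s}\cdot\tfrac{1+\lambda s}{\lambda}=v-1$, i.e.\ $r_1+r_2=v+1$. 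Your final paragraph then rules out the one-value case, and integrality is automatic since the values are attained. So the architecture you chose (similarity and symmetry of $NN^T$) cannot be completed as stated, whereas the inverse-Gram computation closes the gap in a few lines.
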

\noindent
Let $\mathcal{D}$ be a Ryser design of order $v$ and index $\lambda$. It is known that a Ryser design has two invariants $r_1 > r_2$ with $r_1 + r_2 = v + 1$ such that every point has replication number either $r_1$ or $r_2$.  Let $g$ denote the gcd between $r_1 - 1$ and $r_2 - 1$ and let $c$ and $d$ respectively denote the integers $\dfrac{r_1 - 1}{g}$ and $\dfrac{r_2 - 1}{g}$. Let $\rho=\dfrac{r_1-1}{r_2-1}=\dfrac{c}{d}$. Then  $r_1+r_2=v+1 $ implies  $g \text{ divides } (v-1)\text{ and } v-1=(c+d)g$.
Let $a=c - d$ then any two of $c, d$ and $a$ are coprime to each other.
The point-set is partitioned into subsets $E_1$ and $E_2$, where $E_i$ is the set of points with replication number $r_i$ and let $e_i=|E_i|$ for $ i = 1, 2$. Then $e_1,e_2 > 0$ and $e_1 + e_2 = v$.
For a block $A$, let $\tau_i(A)$ denote $|E_i\cap A|$, the number of points of block $A$ with replication number $r_i$ for $i=1,2$.
Then $|A|=\tau_1(A)+\tau_2(A)$. \textit{We say a block $A$ is large, average or small if $|A|$ is greater than $2\lambda$, equal to $2\lambda$ or less than $2\lambda$ respectively.  A block which is not average is called a non average block.}  \\
The Ryser-Woodall complementation of a Ryser design $\mathcal{D}$ of index $\lambda$ with respect to some block $\,A \in \mathcal{D}\,$ is either a symmetric design or a Ryser design of index $(|A|-\lambda)$. If $\mathcal{D}*A$ is the new Ryser design of index $(|A|-\lambda)$ obtained by Ryser-Woodall complementation of a Ryser design $\mathcal{D}$ with respect to the block $A$, we denote the new parameters of $\mathcal{\mathcal{D}*A}$ by $\lambda(\mathcal{D}*A), e_1(\mathcal{D}*A)$ etc.\\
Let $\mathcal{D}_r(X)$ denote the set of all incidence structures $\mathcal{D}=(X,\mathcal{B})$ where $\mathcal{B}$ is a set of subsets of $X$ and $\mathcal{D}$ is a Ryser design with replication numbers $r_1 \text{ and } r_2=v+1-r_1$; or a symmetric design with block size $r_1 \text{ or } r_2$.
\begin{proposition}[{\cite[Proposition 14.1.7]{Ion2}}] \label{prop:complement-properties}
	Let $\mathcal{D}\in \mathcal{D}_r(X)$ and let $A,B$ be blocks of 
	$\mathcal{D}$. Then $\mathcal{D}* A\in \mathcal{D}_r(X)$ and the following 
	conditions hold: 
	\begin{enumerate}[{\rm (i)}]
		\item $(\mathcal{D}*A)*A=\mathcal{D}$;
		\item $A\triangle B$ is a block of $\mathcal{D}*A$ and $(\mathcal{D}*A 
		)*(A\triangle B)=\mathcal{D}*(B)$;
		\item $r_1(\mathcal{D}*A)=r_1(\mathcal{D})$;
		\item $\lambda(\mathcal{D}*A)=|A|-\lambda(\mathcal{D})$; 
		\item $E_1(\mathcal{D}*A)=E_1(\mathcal{D})\triangle A$;
		\item 
		$e_1(\mathcal{D}*A)=e_1(\mathcal{D})-\tau_1(A)(\mathcal{D})+
		\tau_2(A)(\mathcal{D}
		)$;
		\item $\mathcal{D}*A$ is a symmetric design if and only if 
		$A=E_1(\mathcal{D})\text{ or } A=E_2(\mathcal{D})$. 
	\end{enumerate}
\end{proposition}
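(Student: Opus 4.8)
The plan is to reduce the entire proposition to two elementary computations carried out in the Boolean algebra of subsets of $X$, using only associativity of $\triangle$, the identity $A\triangle A=\emptyset$, and the defining hypothesis that every pairwise block intersection in $\mathcal D$ has size $\lambda$. Write $\mathcal D=(X,\mathcal A)$ and $\mathcal B=\{A\}\cup\{A\triangle B:B\in\mathcal A,\ B\neq A\}$, so that $\mathcal D*A=(X,\mathcal B)$. The map $B\mapsto A\triangle B$ is injective on $\mathcal A\setminus\{A\}$ (if $A\triangle B_1=A\triangle B_2$ then $B_1=B_2$), and no $A\triangle B$ with $B\neq A$ equals $A$ since blocks are nonempty; hence $|\mathcal B|=v$. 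Once (iii) and (iv) below are in hand, $\mathcal D*A$ lies in $\mathcal D_r(X)$: it is a Ryser design when $\mathcal B$ has two block sizes and a symmetric design when it has one.

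First I would compute, for each point $x$, its replication number $r'(x)$ in $\mathcal D*A$ by splitting on whether $x\in A$, writing $r(x)$ for the replication of $x$ in $\mathcal D$. For $x\in A$ one has $x\in A\triangle B\iff x\notin B$, so counting the block $A$ together with the $v-r(x)$ blocks $B\neq A$ missing $x$ gives $r'(x)=v+1-r(x)$; for $x\notin A$ one has $x\in A\triangle B\iff x\in B$, so $r'(x)=r(x)$. Since $r_1+r_2=v+1$, complementation swaps the two replication values on $A$ and fixes them off $A$. This single fact delivers (iii) (the unordered pair $\{r_1,r_2\}$, hence the larger value $r_1$, is preserved), (v) (the $r_1$-points of $\mathcal D*A$ are $(E_1\setminus A)\cup(A\cap E_2)=E_1\triangle A$), and (vi) (taking cardinalities, $|E_1\triangle A|=e_1+|A|-2\tau_1(A)=e_1-\tau_1(A)+\tau_2(A)$). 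For (vii), a member of $\mathcal D_r(X)$ is symmetric exactly when all replication numbers coincide; the swap-on-$A$ rule shows this forces $E_1\cap A=\emptyset$ together with $E_2\subseteq A$ (all points of value $r_1$), i.e. $A=E_2$, or the mirror condition $A=E_1$, and conversely either equality makes every point equireplicate.

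Next I would establish (iv) by the pointwise identity $x\in(A\triangle B_1)\cap(A\triangle B_2)$ iff either $x\in A\setminus(B_1\cup B_2)$ or $x\in(B_1\cap B_2)\setminus A$, which gives $|(A\triangle B_1)\cap(A\triangle B_2)|=|A\setminus(B_1\cup B_2)|+|(B_1\cap B_2)\setminus A|$. Writing $t=|A\cap B_1\cap B_2|$ and using $|A\cap B_i|=|B_1\cap B_2|=\lambda$, inclusion--exclusion yields $|A\setminus(B_1\cup B_2)|=|A|-2\lambda+t$ and $|(B_1\cap B_2)\setminus A|=\lambda-t$, whose sum is $|A|-\lambda$ with the triple overlap $t$ cancelling; the remaining case $A\cap(A\triangle B)=A\setminus B$ has size $|A|-\lambda$ directly. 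Hence all pairwise intersections in $\mathcal B$ equal $|A|-\lambda$, proving (iv) and confirming membership in $\mathcal D_r(X)$, the index $|A|-\lambda\geq1$ being positive since $|A|>\lambda$. Parts (i) and (ii) are then pure symmetric-difference algebra: complementing $\mathcal B$ again at $A$ sends each $A\triangle B$ back to $A\triangle(A\triangle B)=B$ and fixes $A$, recovering $\mathcal A$, which is (i); and for (ii), with $C=A\triangle B$ one checks $C\triangle A=B$ and $C\triangle(A\triangle B')=B\triangle B'$, so $(\mathcal D*A)*C$ consists of $C$ together with $\{B\}\cup\{B\triangle B':B'\neq A,B\}$, which is exactly $\mathcal D*B$.

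The routine parts are the counting identities; the only step demanding genuine care is (iv), where one must verify that the triple-intersection term $t$ cancels uniformly in $t$, so that $\mathcal D*A$ really carries a constant index $|A|-\lambda$ rather than merely the ``expected'' value for generic block pairs. A secondary point needing attention is the equivalence used in (vii) between being a symmetric design and having a single replication number within $\mathcal D_r(X)$; I would invoke this from the standing structure theory rather than reprove it, and I would note that the labelling convention $r_1>r_2$ is precisely what makes the equality $r_1(\mathcal D*A)=r_1(\mathcal D)$ in (iii) literal rather than an equality of unordered pairs.
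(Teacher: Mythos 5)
Your proof is correct and is essentially the standard argument: the paper itself states this proposition without proof, citing Proposition 14.1.7 of Ionin--Shrikhande \cite{Ion2}, whose proof rests on the same two computations you carry out (the pointwise replication swap $r'(x)=v+1-r(x)$ on $A$ versus $r'(x)=r(x)$ off $A$, and the inclusion--exclusion cancellation of the triple intersection $t$ giving constant new index $|A|-\lambda$). The only step you gloss over---verifying condition 3 of the Ryser design definition for $\mathcal{D}*A$, namely that each new block exceeds the new index, which is the one-line check $|A\triangle B|=|A|+|B|-2\lambda>|A|-\lambda$ because $|B|>\lambda$---should be made explicit, but this does not affect the soundness of your argument.
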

\begin{remark}\label{remark:1}
	Since $|A\triangle B|=|A|+|B|-2|A\cap B|$,  observe that if a design is of Type-1 then it has all average blocks except for one, and hence a Type-2 Ryser design must have at least two non average blocks.   
\end{remark}

\begin{thm}[{\cite[Theorem 14.1.17]{Ion2}}] 
	\label{thm:rho<lambda}
	For any Ryser design with index $\lambda>1$ and replication numbers $r_1 \text{ and } r_2$, 
	$\quad\displaystyle{\frac{\lambda}{\lambda-1}\leq \rho \leq \lambda}\text{ and } \rho \notin (\lambda-1,\lambda)$, where $\rho=\dfrac{r_1-1}{r_2-1}$.
\end{thm}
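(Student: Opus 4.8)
The plan is to reduce the whole statement to a divisibility fact carried by a single linear relation on each block. First I would fix a block $A$ and double-count incidences between the points of $A$ and the other $v-1$ blocks: every other block meets $A$ in exactly $\lambda$ points, while a point of $A$ of replication $r_i$ lies in $r_i-1$ of them. Hence $(r_1-1)\tau_1(A)+(r_2-1)\tau_2(A)=(v-1)\lambda$, and since $v-1=(r_1-1)+(r_2-1)$ this rearranges to
\[
(r_1-1)\bigl(\tau_1(A)-\lambda\bigr)+(r_2-1)\bigl(\tau_2(A)-\lambda\bigr)=0.
\]
Writing $\rho=\dfrac{r_1-1}{r_2-1}=\dfrac{c}{d}$ with $\gcd(c,d)=1$ and $a=c-d\ge 1$, coprimality forces, for every non-average block, an integer $m\neq 0$ with $\tau_1(A)-\lambda=dm$ and $\lambda-\tau_2(A)=cm$; in particular $|A|=2\lambda-am$, so $A$ is small exactly when $m\ge 1$. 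Since $\mathcal{D}$ has blocks of at least two sizes, some non-average block, hence some such $m\neq 0$, occurs.

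Suppose first that $\mathcal{D}$ has a small block $A$, so $m\ge 1$. Then $\tau_2(A)=\lambda-cm\ge 0$ gives $c\le cm\le\lambda$, and I claim this single inequality already yields all three conclusions. Indeed $a\ge 1$ gives $a\lambda\ge\lambda\ge c$, which is exactly $\rho\ge\frac{\lambda}{\lambda-1}$; $d\ge 1$ gives $d\lambda\ge\lambda\ge c$, which is $\rho\le\lambda$; and if $\rho\in(\lambda-1,\lambda)$, then $d(\lambda-1)<c\le\lambda$ would force $d<\frac{\lambda}{\lambda-1}\le 2$, i.e.\ $d=1$, whereupon the open interval $(\lambda-1,\lambda)$ would have to contain the integer $c$, which is impossible. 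Thus the presence of a small block settles the theorem outright.

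The remaining case, where $\mathcal{D}$ has no small block, is where I expect the genuine difficulty. Now every block satisfies $\tau_1(A)\le\lambda$, and complementation is of no use, since complementing with a large or average block $B$ gives index $|B|-\lambda\ge\lambda$ and so cannot lower $\lambda$. Moreover the single-block identity is honestly too weak here: for $\rho>\lambda$ one can still write down integer triples $(m,\tau_1,\tau_2)$ meeting $0\le\tau_i$ and $\lambda<|A|<v$ block-by-block, so such $\rho$ can only be excluded globally. The naive mirror of the small-block computation breaks down precisely because the sole elementary size bound for a large block is $|A|<v$ rather than one tied to $2\lambda$, so $\tau_1(A)\ge 0$ yields only $d\le\lambda$ and nothing about $c$. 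My plan is therefore to bring in global counting: summing $\tau_1(A)\le\lambda$ over all $v$ blocks gives $e_1 r_1=\sum_A\tau_1(A)\le\lambda v$, which simplifies to $r_1 r_2\ge\lambda(r_1+r_2)$. As this does not by itself constrain $\rho$ (it becomes vacuous when $\gcd(r_1-1,r_2-1)$ is large), I would combine it with a finer intersection estimate: using that each average block carries exactly $\lambda$ points of $E_2$ while all blocks meet pairwise in $\lambda$ points, I would analyse how these pairwise intersections distribute between $E_1$ and $E_2$ and bound the number of $E_2$-points forced by the average blocks, showing that any $\rho$ outside the claimed range would require more than $e_2$ such points. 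Making this packing/intersection count rigorous is, I expect, the crux of the whole argument.
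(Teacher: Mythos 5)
Your small-block case is correct, and it is exactly the single-block mechanism this paper records in equations \eqref{tau1r1-1+tau2r2-1}--\eqref{sizeofAform} and Lemma \ref{lemma:blocksize}: a small block forces $\tau_2(A)=\lambda-cm\geq 0$ with $m\geq 1$, hence $c\leq\lambda$, and your three equivalences ($\rho\geq\lambda/(\lambda-1)\iff \lambda a\geq c$, $\;\rho\leq\lambda\iff c\leq\lambda d$, and the integrality argument excluding the interval) all check out. Note, however, that the paper itself does not prove this theorem at all; it quotes it from \cite[Theorem 14.1.17]{Ion2}, so your attempt has to stand on its own, and it does not.

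The genuine gap is the case you yourself flag and then leave open: a Ryser design need not have a small block, and this is not a degenerate possibility one can hope to exclude --- complementing a symmetric $(13,9,6)$ design at a block gives a Ryser design with $\lambda=3$, one large block of size $9$ and twelve average blocks of size $6$, and in that example $\rho=2=\lambda-1$, so the conclusion in this case is also tight. Your sketched tools do not reach it: $e_1r_1\leq\lambda v$ is, as you admit, vacuous for $\rho$, and the ``packing'' of $E_2$-points by average blocks is never formulated as a provable inequality (average blocks need not even exist in this case). What is actually needed is quite specific. Set $D=e_1-r_2$ and $y=(e_1-\lambda)/d$; then $y$ is an integer and $yc=\lambda+D$ (equation \eqref{yform} of the paper). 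Your own summation idea, pushed one step further exactly as in the paper's Proposition \ref{prop:Dandblocksize}, shows that the absence of small blocks forces $r_1D=\sum_A\bigl(\lambda-\tau_2(A)\bigr)-\lambda\leq-\lambda$, hence $D\leq-1$. If one also knew $y\geq 1$, then $c\leq yc=\lambda+D\leq\lambda-1$, and your ``one inequality yields everything'' trick applies verbatim with $\lambda-1$ in place of $\lambda$, finishing the theorem. So in the missing case the entire content of the statement condenses into the single claim $e_1>\lambda$ (equivalently $y\geq1$; equivalently, via \eqref{e1form}, $r_2<2\lambda$) for a design all of whose blocks are average or large --- and that claim does not follow from any one block, precisely because every block now satisfies $\tau_1(A)\leq\lambda$. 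This positivity of $y$ (and of its companion $x$) is where the cited proof does its real work, and nothing in your proposal supplies it; as written, you have proved half of the theorem.
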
    
\begin{thm}[{\cite[Theorem 1.7]{Ser1}}]\label{thm:e1e2=lambda(v-1)}
	A Ryser design is of Type-1 if and only if $\,e_1e_2=\lambda(v-1)$.
\end{thm}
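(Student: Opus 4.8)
The plan is to deduce both directions from two double counts together with the description of Type-1 designs implicit in Proposition~\ref{prop:complement-properties}. I would first record two identities valid in every Ryser design. Counting the pairs (point, unordered pair of blocks through it) two ways gives $\sum_{x}\binom{r(x)}{2}=\binom{v}{2}\lambda$; inserting $r(x)\in\{r_1,r_2\}$ and $r_1+r_2=v+1$ collapses this to $e_1(r_1-r_2)+r_2(r_2-1)=\lambda(v-1)$, so $e_1$ is determined by $v,\lambda,r_1,r_2$. Eliminating $\lambda(v-1)$ between this and $e_1e_2=e_1(v-e_1)$ gives the discriminant identity
\[ e_1e_2-\lambda(v-1)=-(e_1-r_2)(e_1-r_2+1), \]
so that $e_1e_2=\lambda(v-1)$ holds if and only if $e_1=r_2$ or $e_1=r_2-1$. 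Secondly, for a fixed block $A$ the count $\sum_{x\in A}(r(x)-1)=(v-1)\lambda$ gives $\tau_1(A)(r_1-1)+\tau_2(A)(r_2-1)=(v-1)\lambda$; solved together with $\tau_1(A)+\tau_2(A)=|A|$ this determines $\tau_1(A),\tau_2(A)$ from $|A|$ and yields $(\tau_2(A)-\tau_1(A))(r_1-r_2)=(v-1)(|A|-2\lambda)$.

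For the forward direction I would use that, by Proposition~\ref{prop:complement-properties}(i) and (vii), $\mathcal{D}$ is of Type-1 exactly when one of $E_1,E_2$ is a block. If $E_1\in\mathcal{B}$ then $\tau_1=e_1$, $\tau_2=0$, so the block identity gives $e_1(r_1-1)=(v-1)\lambda$; comparing with the forced value of $e_1$ yields $e_1(r_2-1)=r_2(r_2-1)$, hence $e_1=r_2$ and $e_1e_2=r_2(r_1-1)=\lambda(v-1)$. The case $E_2\in\mathcal{B}$ is the mirror image and gives $e_1=r_2-1$; in either case equality holds.

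The backward direction is the real content. By the discriminant identity I may assume $e_1=r_2$ (so $e_2=r_1-1$), the case $e_1=r_2-1$ being obtained by interchanging $E_1$ and $E_2$, and it suffices to exhibit a block equal to $E_1$. Feeding $e_1=r_2$ into the block identity shows that every block $A$ has
\[ \tau_1(A)=r_2-jd,\qquad \tau_2(A)=jc,\qquad |A|=r_2+ja \]
for some integer $j$ with $0\le j\le g$ (here $a=c-d$), and in particular $\tau_1(A)\ge 1$ for every block. The only possible occurrence of a point class as a block is then the value $j=0$: a block of minimum size $r_2$ automatically has $\tau_1=e_1$ and $\tau_2=0$, so equals $E_1$. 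Thus the theorem reduces to proving that the extreme value $j=0$ is attained.

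This existence statement is the main obstacle. The symmetric double counts ($\sum_A\tau_1(A)=e_1r_1$ and the pairwise counts) are invariant under permuting blocks and cannot by themselves force a minimum-size block, since the second moment $\sum_A\tau_1(A)^2$ measures the clustering of $E_1$-points and is not fixed by the parameters. To force $j=0$ I would argue from the rigidity of the incidence equation $N^{T}N=\lambda J+\mathrm{diag}(|A|-\lambda)$ (with $N$ the nonsingular incidence matrix), analysing the sub-structure induced on $E_1$ and deriving a contradiction from the assumption that no block contains all of $E_1$; an alternative is a minimal-counterexample argument on the index $\lambda$, complementing with respect to a suitable block and using Proposition~\ref{prop:complement-properties} together with the fact that a block-complement of a Type-1 design is again of Type-1. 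The delicate point in either route is that complementation generally disturbs the equality $e_1e_2=\lambda(v-1)$, so the choice of block to complement, and the verification that the index genuinely drops while a point class survives as a candidate, is where I expect the argument to demand the most care.
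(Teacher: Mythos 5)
Your algebraic skeleton is correct, and it in fact reproduces identities the paper itself records: your ``discriminant identity'' $e_1e_2-\lambda(v-1)=-(e_1-r_2)(e_1-r_2+1)$ is exactly the relation $e_1e_2=\lambda(v-1)-D(D+1)$ that the paper derives from (\ref{e1Dform}) and (\ref{e2Dform}) inside the proof of Theorem \ref{thm:type-1iffD(D+1)=0}. Your forward direction is also sound: by Proposition \ref{prop:complement-properties}(i) and (vii), $\mathcal{D}$ is of Type-1 precisely when $E_1$ or $E_2$ is a block, and feeding $\tau_2=0$ (resp.\ $\tau_1=0$) into the block count (\ref{tau1r1-1+tau2r2-1}) gives $e_1=r_2$ (resp.\ $e_1=r_2-1$) --- the cancellation of $r_2-1$ being legitimate since $r_2=1$ is excluded, as the paper notes. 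Your reduction of the converse under $D=0$ is likewise correct: since $gd=r_2-1$, every block has $\tau_1(A)=r_2-jd\geq 1$ with $0\leq j\leq g$, so $E_2$ can never be a block and $E_1$ is a block exactly when some block attains the minimum size $r_2$.

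The problem is that the proof stops precisely where the theorem begins. The existence of a block with $j=0$ --- equivalently, that $D(D+1)=0$ forces one of $E_1,E_2$ to actually occur as a block --- is the entire content of Seress's result, and your proposal offers only two unexecuted strategies for it. Neither sketch, as stated, closes the gap, for reasons you partly identify yourself: first-moment identities such as $r_1D=\sum_A(\lambda-\tau_2(A))-\lambda$ (the engine of Proposition \ref{prop:Dandblocksize}) constrain only $\sum_A j_A$ and cannot certify that the extreme value $j=0$ is attained; and a minimal-counterexample induction on $\lambda$ via complementation has no usable induction hypothesis, because complementing destroys the equality $e_1e_2=\lambda(v-1)$ --- knowing $\mathcal{D}*A$ is a Ryser design of smaller index tells you nothing unless you can already certify it is Type-1, which is the statement being proved (the closure of the Type-1 class under complementation, from Proposition \ref{prop:complement-properties}(ii), runs the wrong way here). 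Note also that the paper supplies no proof to compare against: Theorem \ref{thm:e1e2=lambda(v-1)} is imported from \cite{Ser1}, whose argument requires substantive structural work beyond the parameter identities you use. As it stands, you have proved the elementary equivalence ``$e_1e_2=\lambda(v-1)\iff D\in\{0,-1\}$'' and the forward implication, and reduced --- but not proved --- the converse; the proposal is therefore incomplete as a proof of the theorem.
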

\begin{thm}[{\cite[Theorem 14.4.8]{Ion2}}]\label{thm:lambda<9}
	All Ryser designs of index less than 9 are of Type-1.
\end{thm}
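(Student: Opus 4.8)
The plan is to exploit the fact that the index $\lambda$ of a Ryser design is a positive integer, so the hypothesis $\lambda<9$ leaves only the eight values $\lambda\in\{1,2,3,4,5,6,7,8\}$, and to dispose of these in groups according to which structural tool applies. The cheapest cases are the primes $2,3,5,7$, which are settled at once by the Singhi--Shrikhande theorem \cite{Sin} (the conjecture holds whenever $\lambda$ is prime), together with $4=2\cdot 2$ and $6=2\cdot 3$, which fall under Seress's theorem \cite{Ser2} for $\lambda=2p$. This already reduces the statement to the two residual values $\lambda=1$ and $\lambda=8$, the only elements of $\{1,\dots,8\}$ that are neither prime nor of the form $2p$.

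For $\lambda=1$ the $\rho$-bounds of Theorem~\ref{thm:rho<lambda} do not apply, since they require $\lambda>1$, so I would argue directly. A Ryser design of index $1$ is a family of $v$ proper subsets of a $v$-set, pairwise meeting in exactly one point and of at least two sizes. The classical structure theory of such configurations (de Bruijn--Erd\H{o}s) forces a projective plane, which is excluded because it has constant block size, or a near-pencil; and the near-pencil is visibly the Ryser--Woodall complement of a degenerate symmetric design, hence of Type-1.

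The genuine work is the single composite value $\lambda=8$. Here I would argue by contradiction: suppose a Type-2 Ryser design of index $8$ exists. The known upper bound on $v$ that is cubic in $\lambda$ (the bound this paper later sharpens to a quadratic under $e_2>e_1$) makes $v$ bounded for $\lambda=8$, so there are only finitely many admissible triples $(v,r_1,r_2)$. For each, the window $\rho\in[8/7,7]\cup\{8\}$ coming from Theorem~\ref{thm:rho<lambda}, the relations $r_1+r_2=v+1$ and $v-1=(c+d)g$, the pairwise coprimality of $c,d,c-d$, and the known block-size inequality $|A|\ge r_2$ cut the candidate list down sharply; meanwhile Theorem~\ref{thm:e1e2=lambda(v-1)} recasts the Type-2 hypothesis as the strict inequality $e_1e_2>8(v-1)$. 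Each surviving parameter set would then be eliminated either by a failure of integrality in its block-size spectrum or by the non-existence of the symmetric $(v,k,k-8)$ design whose block complementation would be needed to realise it.

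The main obstacle is precisely this last step for $\lambda=8$: the $\rho$-window and the divisibility relations do not by themselves rule out Type-2 behaviour, so one must combine the cubic bound on $v$ with several counting identities to reach a finite list and then carry out an explicit elimination of the survivors. I expect the bookkeeping---tracking $e_1,e_2$, the two replication numbers, and the admissible block sizes simultaneously---to be the delicate part, whereas the prime, $2p$, and $\lambda=1$ cases are routine given the cited results.
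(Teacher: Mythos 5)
A preliminary remark: the paper does not prove this statement at all. It is quoted verbatim from Ionin and Shrikhande's book (Theorem 14.4.8 of \cite{Ion2}) and is used in the paper only as a black box (e.g., to dispose of the case $\lambda=3$ in the proof of Lemma \ref{lemma:D<-1_for_rho=lambda,lambda-1}). So the only meaningful question is whether your reconstruction is itself a complete proof. Your decomposition is indeed the one the literature uses: primes $2,3,5,7$ by Singhi--Shrikhande \cite{Sin}, $\lambda=6$ (and, if the hypothesis there admits $p=2$, also $\lambda=4$) by Seress \cite{Ser2}, $\lambda=1$ by de Bruijn--Erd\H{o}s, leaving $\lambda=8$ as the residual case.

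The genuine gap is exactly where you place the ``main obstacle'': the case $\lambda=8$ is not proved, it is only planned. Observing that the cubic bound $v\leq\lambda^3+2$ makes the list of candidate parameter triples finite, and asserting that each survivor ``would then be eliminated either by a failure of integrality\dots or by the non-existence of the symmetric $(v,k,k-8)$ design,'' is not an argument: no mechanism is exhibited that eliminates even one Type-2 parameter set, and non-existence of a symmetric design is in general not decidable by the tools you list. Since $\lambda=8$ is the only value of $\lambda<9$ not already covered by the cited theorems, it carries the entire content of the statement; in \cite{Ion2} it requires a dedicated sequence of lemmas and cannot be compressed into a finiteness remark. Two smaller points: (i) your reformulation of the Type-2 hypothesis has the inequality reversed --- since $e_1e_2=\lambda(v-1)-D(D+1)$ and $D(D+1)\geq 0$ for every integer $D$, a Type-2 design satisfies $e_1e_2<8(v-1)$, not $e_1e_2>8(v-1)$; (ii) you should verify that Seress's theorem for $\lambda=2p$ genuinely covers $p=2$, for if his proof requires $p$ odd, then $\lambda=4$ joins $\lambda=8$ as a case needing a separate argument.
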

\begin{thm}[{\cite[Theorem 14.1.20]{Ion2}}]\label{thm:a<lambda}
	If $r_1, r_2$ are the replication numbers of a Ryser design of index $\lambda>2$, then $r_1-r_2\leq (\lambda-1)g, \text{ where } g=\gcd(r_1-1,r_2-1)$.    
\end{thm}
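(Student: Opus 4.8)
The plan is to reduce the claimed inequality to a statement about the reduced parameter $a=c-d$ and then exploit the block-size structure forced by a single counting identity. Since $r_1-1=cg$ and $r_2-1=dg$, we have $r_1-r_2=(c-d)g=ag$, so $r_1-r_2\le(\lambda-1)g$ is equivalent to $a\le\lambda-1$, i.e.\ (as $a$ is a positive integer) to $a<\lambda$. Double counting the incidences between a fixed block $B$ and the $v$ blocks gives $\sum_{x\in B}r_x=|B|+(v-1)\lambda$, that is $(r_1-1)\tau_1(B)+(r_2-1)\tau_2(B)=\lambda(v-1)$; dividing by $g$ and using $v-1=(c+d)g$ yields the key per-block identity $c\,\tau_1(B)+d\,\tau_2(B)=(c+d)\lambda$. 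Because $\gcd(c,d)=1$, every solution has the form $\tau_1(B)=\lambda+m_B d$, $\tau_2(B)=\lambda-m_B c$ for some integer $m_B$, whence $|B|=\tau_1(B)+\tau_2(B)=2\lambda-m_B a$. Thus $B$ is small, average, or large exactly when $m_B>0$, $m_B=0$, or $m_B<0$.

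This classification settles the easy case immediately. If $\mathcal D$ has a small block $B$, then $m_B\ge1$, and since every block of a Ryser design satisfies $|B|>\lambda$ we get $2\lambda-m_B a>\lambda$, i.e.\ $m_B a<\lambda$, so $a\le m_B a<\lambda$, as required. (Note $\tau_2(B)\ge0$ also forces $c\le\lambda$ here.) So it remains to treat designs with no small block, i.e.\ with $m_B\le0$ for every block. By Remark~\ref{remark:1} such a $\mathcal D$ still possesses at least one non-average block, which is now necessarily large; applying $\tau_1(B)\ge0$ to it gives only $d\le\lambda$, and one checks that Theorem~\ref{thm:rho<lambda} alone (it yields $a\le(\lambda-1)d$ and $d\le(\lambda-1)a$) is not enough to conclude. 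Hence the no-small-block case is the crux.

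For that case the plan is to pass to a global count. Summing the per-block identity over all blocks and using $\sum_B\tau_i(B)=r_ie_i$ together with $e_1+e_2=v$ and $r_1+r_2=v+1$ gives the exact identity
\begin{equation*}
a\sum_{B}(-m_B)=r_1r_2-\lambda(v+1),\qquad\text{equivalently}\qquad r_1e_1+r_2e_2=r_1r_2+\lambda(v-1).
\end{equation*}
I would combine this with the per-block constraints $0\le\tau_1(B),\tau_2(B)$ and $\tau_2(B)\le e_2$ (and Theorem~\ref{thm:rho<lambda} to control $\rho=c/d$) to bound $\sum_B(-m_B)$ from above and force $a<\lambda$. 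An alternative route is complementation: by Proposition~\ref{prop:complement-properties} one has $r_1(\mathcal D*A)=r_1(\mathcal D)$, so $a$ is invariant, and complementing with respect to a largest block $A$ turns $A$ into a small block of $\mathcal D*A$, reducing to the easy case there. The main obstacle is exactly this no-small-block configuration: no individual block forces the inequality, and complementation changes the index to $|A|-\lambda$, so the bound must be extracted from the simultaneous behaviour of all blocks --- most likely via the displayed identity together with $\tau_2(B)\le e_2$, or via a second-moment (Fisher-type) estimate on the blocks through a fixed point. I expect the hypothesis $\lambda>2$ to enter precisely in closing this final estimate.
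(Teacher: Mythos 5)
Your proposal is not complete, and the incompleteness sits exactly where you yourself locate ``the crux.'' The reduction $r_1-r_2\le(\lambda-1)g \iff a\le\lambda-1$ is right, your per-block identity and the parametrization $\tau_1(B)=\lambda+m_Bd$, $\tau_2(B)=\lambda-m_Bc$, $|B|=2\lambda-m_Ba$ are correct (they match Lemma~\ref{lemma:blocksize} up to the sign of the parameter), and the small-block case is genuinely settled by $|B|>\lambda$. But the no-small-block case is only a plan, and neither of the two routes you sketch closes it as stated. The global identity $a\sum_B(-m_B)=r_1r_2-\lambda(v+1)$ is correct, but when every $m_B\le 0$ it only tells you $r_1r_2\ge\lambda(v+1)$; you give no mechanism by which this, together with $0\le\tau_i(B)$ and $\tau_2(B)\le e_2$, forces $a<\lambda$, and indeed $\tau_1(B)\ge 0$ on a large block bounds $d$, not $a$. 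The complementation fallback fails quantitatively, for the reason you half-notice: if $A$ is large with $|A|=2\lambda+ta$, $t\ge 1$, then $\mathcal{D}*A$ has index $\lambda^*=\lambda+ta$, the block $A$ is small there, and the easy-case bound in $\mathcal{D}*A$ reads $ta<\lambda^*=\lambda+ta$ --- a tautology, yielding nothing about $a<\lambda$. (You would also need to rule out $A=E_1$ or $A=E_2$, where $\mathcal{D}*A$ is symmetric by Proposition~\ref{prop:complement-properties}(vii) and the Ryser-design machinery does not apply.) Finally, the hypothesis $\lambda>2$ is never actually used; saying you ``expect'' it to enter in an estimate you have not performed is an acknowledgment of the gap, not a proof.

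For the comparison you were asked about: this statement is Theorem~\ref{thm:a<lambda}, which the paper imports verbatim from Ionin and Shrikhande \cite[Theorem 14.1.20]{Ion2} and does not prove, so there is no in-paper argument to measure your attempt against; the paper only ever uses the conclusion, in the form $a<\lambda$ (inequality (\ref{a_less_than_lambda})). Judged on its own, your writeup proves the theorem only for designs possessing a small block; to repair it you must either carry out the missing estimate in the no-small-block case (where, by Proposition~\ref{prop:Dandblocksize}-type reasoning, one knows $D\le-1$, and the paper's integers $x,y$ with $xd+yc=2\lambda-1$ give sharper leverage than the crude sum over $m_B$), or consult the source proof in \cite{Ion2} rather than reconstructing it blind.
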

\begin{thm}[{\cite[Corollary 14.1.16]{Ion2}}]\label{thm:g=1isPencil}
	If $\mathcal{D}$ is a Ryser design with $g = 1$ , then $\mathcal{D}$ is of Type-1.
\end{thm}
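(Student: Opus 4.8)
The plan is to show that the hypothesis $v=2^{n}+1$ forces $g=\gcd(r_{1}-1,r_{2}-1)=1$, after which Theorem~\ref{thm:g=1isPencil} immediately gives that $\mathcal{D}$ is of Type-1. Since $v-1=(c+d)g=2^{n}$, both $g$ and $c+d$ are powers of $2$. Writing $g=2^{s}$, I will assume $s\geq 1$ and derive a contradiction, so that $g=2^{0}=1$ is the only surviving possibility. Note first that $c+d=2^{n-s}$ with $c>d\geq 1$ forces $c+d\geq 4$, hence $n-s\geq 2$ and $c+d$ is even; since $\gcd(c,d)=1$, the two integers $c,d$ cannot both be even, so they are both odd and $a=c-d$ is a positive even integer. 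This is the one place where the hypothesis $v-1=2^{n}$ is genuinely used.

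The engine of the argument is an exact linear identity for $e_{1}$. Fixing a block $A$ and double counting the incident pairs $(x,B)$ with $x\in A\cap B$ and $B\neq A$ gives $\tau_{1}(A)(r_{1}-1)+\tau_{2}(A)(r_{2}-1)=\lambda(v-1)$, because $A$ meets each of the other $v-1$ blocks in exactly $\lambda$ points. Summing this relation over all $v$ blocks and using $\sum_{A}\tau_{i}(A)=e_{i}r_{i}$, then dividing by $g$ (recall $r_{i}-1=c_{i}g$ and $v-1=(c+d)g$), yields $c\,r_{1}e_{1}+d\,r_{2}e_{2}=\lambda(c+d)v$. Combining this with $e_{1}+e_{2}=v$ and simplifying
\[
c\,r_{1}-d\,r_{2}=(c^{2}-d^{2})g+(c-d)=a\bigl((c+d)g+1\bigr)=a\,v,
\]
I obtain, after cancelling $v>0$, the clean identity
\[
a\,e_{1}=\lambda(c+d)-d\,r_{2}.
\]

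Now I read this identity modulo $2$. Under the assumption $s\geq 1$ the left-hand side is even, since $a$ is even. On the right, $c+d=2^{n-s}$ is even, so $\lambda(c+d)$ is even; while $r_{2}=d\,2^{s}+1$ is odd (as $d\,2^{s}$ is even) and $d$ is odd, so $d\,r_{2}$ is odd. Hence the right-hand side is odd, contradicting the parity of the left-hand side. This rules out $s\geq 1$, so $g=1$, and Theorem~\ref{thm:g=1isPencil} completes the proof. The only genuinely delicate points are the two structural inputs feeding the parity computation: the counting identity $a\,e_{1}=\lambda(c+d)-d\,r_{2}$, whose derivation hinges on the simplification $c\,r_{1}-d\,r_{2}=a\,v$, and the deduction that $c$ and $d$ are both odd. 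Everything else is routine parity bookkeeping, so I expect the verification of the counting identity to be the main (and only mild) obstacle.
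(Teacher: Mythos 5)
You have not proved the assigned statement; you have proved a different theorem and invoked the assigned one. The statement in question is Theorem \ref{thm:g=1isPencil}: if a Ryser design satisfies $g=\gcd(r_1-1,r_2-1)=1$, then it is of Type-1. Your proposal never uses $g=1$ as a hypothesis; instead you assume $v=2^n+1$ (which appears nowhere in the statement), show that this forces $g=1$, and then conclude by citing Theorem \ref{thm:g=1isPencil} itself. As a proof of the assigned statement this is circular: the result to be established is the final step of your argument, and no reasoning is ever offered for why coprimality of $r_1-1$ and $r_2-1$ forces Type-1. Note that the paper itself gives no internal proof of this theorem either --- it is imported verbatim from Ionin and Shrikhande \cite[Corollary 14.1.16]{Ion2} --- so a genuine blind proof would have to reconstruct that argument from the coprimality hypothesis (for instance, starting from relations such as $x+y=g$ in equation (\ref{xplusy}), which becomes very restrictive when $g=1$), rather than cite the corollary.

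For what it is worth, what you actually wrote is a correct proof of the paper's final theorem (``A Ryser design with $v=2^n+1$ points is of Type-1''), and it is a mild streamlining of the paper's own proof of that result. Your identity $a\,e_1=\lambda(c+d)-d\,r_2$ checks out --- it is equivalent to equation (\ref{r2form}) after multiplying through by $d$ --- and your derivation via $c\,r_1-d\,r_2=(c-d)\bigl(g(c+d)+1\bigr)=av$ is sound, as is the parity contradiction: with $g=2^s$, $s\geq 1$, the left side $a\,e_1$ is even while $\lambda(c+d)$ is even and $d\,r_2$ is odd. The paper instead reads the parity off the full summed identity $e_1(2^mc+1)c+e_2(2^md+1)d=\lambda(2^n+1)2^k$ of equation (\ref{eq2}), using that exactly one of $e_1,e_2$ is odd; your reduction to a single-variable identity avoids that case split and is arguably cleaner. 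But that theorem sits downstream of Theorem \ref{thm:g=1isPencil} in the paper's logical structure, not upstream of it, so none of this discharges the task you were given.
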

\begin{thm}[{\cite[Theorem 14.1.19]{Ion2}}]\label{thm:upper-bound-for-v}
	The number of points of a Ryser design of index $\lambda>1$ does not exceed $\lambda^3+2$ and therefore for any fixed $\lambda>1$, there are only finitely many Ryser designs of index $\lambda$.
\end{thm}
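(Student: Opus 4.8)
The plan is to reduce everything to one exact expression for $e_1$ and then bound $v-1=(c+d)g$ using only $\rho\le\lambda$ from Theorem \ref{thm:rho<lambda}. First I would record the local block equation. Fixing a block $A$ and double counting the pairs $(x,C)$ with $x\in A\cap C$ and $C\neq A$ gives $(r_1-1)\tau_1(A)+(r_2-1)\tau_2(A)=(v-1)\lambda$, which after dividing by $g$ becomes $c\,\tau_1(A)+d\,\tau_2(A)=(c+d)\lambda$. Since $\gcd(c,d)=1$ this forces $\tau_1(A)=\lambda+d\,t_A$ and $\tau_2(A)=\lambda-c\,t_A$ for an integer $t_A$, so that $|A|=2\lambda-a\,t_A$; in particular $A$ is small, average or large according as $t_A>0$, $t_A=0$ or $t_A<0$.

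Next I would sum over all $v$ blocks. Using $\sum_A\tau_i(A)=r_ie_i$ together with the identity $c\,r_1-d\,r_2=(c-d)\big((c+d)g+1\big)=a\,v$, the two summed relations collapse to the single clean formula $a\,e_1=(c+d)\lambda-d\,r_2$ (and symmetrically $a\,e_2=c\,r_1-(c+d)\lambda$), so $e_1$ is \emph{determined} by $c,d,g,\lambda$. Feeding in the necessary condition $e_1\ge 1$ and $d\,r_2=d^{2}g+d$ yields $d^{2}g\le(c+d)\lambda-c$, whence
\[
v-1=(c+d)g=\frac{c+d}{d^{2}}\,(d^{2}g)\le(\rho+1)\big((\rho+1)\lambda-\rho\big),\qquad \rho=\tfrac{c}{d}.
\]
The right-hand side is increasing in $\rho$, so I would now split on Theorem \ref{thm:rho<lambda}, which gives $\rho\le\lambda$ and $\rho\notin(\lambda-1,\lambda)$. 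If $\rho\le\lambda-1$ the bound evaluates to $v-1\le\lambda(\lambda^{2}-\lambda+1)=\lambda^{3}-\lambda^{2}+\lambda$, comfortably below $\lambda^{3}+1$.

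The only surviving case is $\rho=\lambda$, where coprimality forces $d=1$, $c=\lambda$, $a=\lambda-1$, and the estimate with $e_1\ge 1$ alone yields merely $v\le\lambda^{3}+\lambda^{2}+1$. To reach $\lambda^{3}+2$ I need the sharper input $e_1\ge 2$: then $a\,e_1\ge 2a$ upgrades the estimate to $g\le\lambda^{2}-\lambda+1$, so $v-1=(\lambda+1)g\le(\lambda+1)(\lambda^{2}-\lambda+1)=\lambda^{3}+1$, i.e. $v\le\lambda^{3}+2$. The finiteness assertion is then immediate: for fixed $\lambda$ only finitely many vectors $(c,d,g)$ satisfy the bound, and each determines $e_1,e_2,r_1,r_2$.

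The hard part is precisely the exclusion of $e_1=1$ in the extremal case $\rho=\lambda$. There $e_1=1$ forces the rigid family $v=\lambda^{3}+\lambda^{2}+1$, $r_1=\lambda^{3}+1$, $r_2=\lambda^{2}+1$, with the unique large-replication point $p$ lying in the $r_1$ blocks of size $\lambda^{2}+1$ and the remaining $\lambda^{2}$ blocks of size $\lambda^{2}+\lambda$ contained in $E_2$; such a design has no small and no average block. Every linear and counting identity is internally consistent for this family, so ruling it out demands a genuine non-existence argument rather than the parameter relations alone. I would attack it through the induced structure on $E_2$: the blocks through $p$ restrict to $\lambda^{2}$-subsets of $E_2$ meeting pairwise in $\lambda-1$ points, while the $E_2$-blocks meet pairwise in $\lambda$, and I would try to extract a contradiction from a Fisher/eigenvalue or integrality obstruction on these two interlocking set systems. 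This step I expect to be the main obstacle.
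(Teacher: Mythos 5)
This statement is one the paper \emph{imports}: Theorem \ref{thm:upper-bound-for-v} is quoted from \cite[Theorem 14.1.19]{Ion2} and is never proved in the paper (the closest in-paper relative is the Section 3 theorem, which proves the sharper bound $v<2\lambda^2+3\lambda-1$ only under the extra hypothesis $e_2>e_1$). So your attempt can only be judged on its own merits, and on those merits it is not a proof. The reduction itself is correct and clean: your ``clean formulas'' $ae_1=(c+d)\lambda-dr_2$ and $ae_2=cr_1-(c+d)\lambda$ are exactly equations (\ref{e1form}) and (\ref{e2form}) of the paper in disguise, the bound $v-1\le(\rho+1)\left((\rho+1)\lambda-\rho\right)$ from $e_1\ge 1$ is valid, the monotonicity in $\rho$ holds, and the split via Theorem \ref{thm:rho<lambda} correctly disposes of $\rho\le\lambda-1$. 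But in the one remaining case $\rho=\lambda$ you need $e_1\ge 2$, and you explicitly leave the exclusion of $e_1=1$ open. That exclusion is not a loose end; along this route it \emph{is} the theorem, since $e_1\ge1$ alone only yields $v\le\lambda^3+\lambda^2+1$.

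Moreover, the gap is worse than your closing sketch suggests, because the attack you propose (``Fisher/eigenvalue or integrality obstruction'' on the two interlocking systems on $E_2$) provably cannot work at the parameter level. In the putative configuration ($v=\lambda^3+\lambda^2+1$, $r_1=\lambda^3+1$, $r_2=\lambda^2+1$), every point of $E_2$ lies in exactly $\lambda$ of the $\lambda^2$ blocks inside $E_2$ and exactly $\lambda^2-\lambda+1$ of the blocks through $p$ (both counts have zero variance); the first system dualizes to the admissible parameter set of a $2$-$(\lambda^2,\lambda,\lambda)$ design, the second to the admissible set $2$-$(\lambda^3+1,\lambda^2-\lambda+1,\lambda-1)$, and both satisfy Fisher's inequality and all divisibility conditions. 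Even the rank condition is satisfied: the Gram matrix of the block traces on $E_2$, which is forced to be $\lambda J-\mathbf{1}_P\mathbf{1}_P^{T}+\mathrm{diag}(|A|-\lambda)$ with $P$ the set of blocks through $p$, has determinant identically zero in $\lambda$, exactly as required of a Gram matrix of $v$ vectors in a $(v-1)$-dimensional space. So no parameter-level, counting, integrality, or rank argument will eliminate this case; what is needed is a citable structural input (in effect a lower bound on $e_1$, equivalently nonnegativity of $y=(e_1-\lambda)/d$), which is presumably what the proof in \cite{Ion2} supplies. It is worth noting that this same configuration is the one that would violate the step ``$0\le x-y<xd+yc$'' in the paper's Section 3 proof, so that nonnegativity of $x$ and $y$ is being treated as known there as well. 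As written, your argument establishes $v\le\lambda^3+\lambda^2+1$, not $v\le\lambda^3+2$.
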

\noindent
For any symmetric $(v,k,\lambda)$ design $\mathcal{D}$, the parameter $~n=k-\lambda$ is called the order of $\mathcal{D}$. It is well known that $4n-1\leq v\leq n^2+n+1~\text{ with }~ v=4n-1$ if and only if $ \mathcal{D} ~\text{ is a Hadamard }(4n-1,2n-1,n-1)$ design or its complement, and $ v=n^2+n+1 \text{ if and only if } \mathcal{D} $ is a projective plane of order \textit{n} or its complement.
Ionin and Shrikhande \cite{Ion1} made the following conjecture.
\begin{con}\label{con:bound_on_v}
	For any Ryser design on  \textit{v}  points $\mbox{ }4\lambda-1\leq v\leq\lambda^2+\lambda+1$.
\end{con}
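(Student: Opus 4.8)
Since the assertion is precisely the Ryser analogue of the classical bounds $4n-1\le v\le n^2+n+1$ for a symmetric design of order $n$, my first move is to split on the Type-1/Type-2 dichotomy. A Type-1 Ryser design of index $\lambda$ is by definition the block complement of a symmetric $(v,k,k-\lambda)$ design, whose order is $n=k-(k-\lambda)=\lambda$, so the symmetric-design estimates hand me $4\lambda-1\le v\le\lambda^2+\lambda+1$ at once. It therefore suffices to prove both inequalities for Type-2 designs, and for the upper bound I would run an induction on the index $\lambda$, the base cases $\lambda\le 8$ being Type-1 by Theorem~\ref{thm:lambda<9}.

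The structural engine is the per-block count: fixing $A$ and summing $|A\cap B|=\lambda$ over the $v-1$ blocks $B\ne A$ gives $\tau_1(A)(r_1-1)+\tau_2(A)(r_2-1)=\lambda(v-1)$, so $\tau_1(A)=\bigl(\lambda(v-1)-|A|(r_2-1)\bigr)/(r_1-r_2)$ is a decreasing function of $|A|$ taking the value $\lambda$ exactly when $|A|=2\lambda$. Hence $A$ is large, average, or small according as $\tau_1(A)<\lambda$, $=\lambda$, or $>\lambda$. This yields the crucial reduction: \emph{if $\mathcal D$ has a small block $A$ then complementing with respect to it (Proposition~\ref{prop:complement-properties}) produces $\mathcal D*A$ of index $|A|-\lambda<\lambda$ on the same $v$ points}, which is either symmetric (so $\mathcal D$ is Type-1, already handled) or a Ryser design of smaller index, whence $v\le(|A|-\lambda)^2+(|A|-\lambda)+1<\lambda^2+\lambda+1$ by the inductive hypothesis. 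So I may assume $\mathcal D$ has no small block.

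Now the global count $e_1\binom{r_1}{2}+e_2\binom{r_2}{2}=\lambda\binom{v}{2}$ gives $\sum_A|A|=e_1r_1+e_2r_2=\lambda(v-1)+r_1r_2$, and no small block means every $|A|\ge 2\lambda$, so $\lambda(v-1)+r_1r_2\ge 2\lambda v$, i.e.\ $r_1r_2\ge\lambda(v+1)$. Since $\sum_A\tau_1(A)=e_1r_1$ and each $\tau_1(A)\le\lambda$, this forces $e_1\le\lambda v/r_1<r_1r_2/r_1=r_2$, so $D=e_1-r_2\le -1$ and $e_2>e_1$; this is exactly the regime singled out by the hypothesis $e_2>e_1$. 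Writing $r_1-1=cg$, $r_2-1=dg$, $v-1=(c+d)g$ with $\gcd(c,d)=1$, the two counts pin down $D=\bigl(\lambda(c+d)-c-cdg\bigr)/(c-d)$, so that $cdg=\lambda(c+d)-c-(c-d)D$ and bounding $v$ reduces to bounding $g$, equivalently to a \emph{lower} bound on $D$. Tellingly, the sign of $D$ is the whole game: if one had $D\ge 0$ then $cdg\le\lambda(c+d)-c$, giving $v-1\le\lambda(\rho+2+\rho^{-1})-(\rho+1)$, whose maximum over $\rho\le\lambda$ (Theorem~\ref{thm:rho<lambda}) is attained at $\rho=\lambda$ and equals exactly $\lambda^2+\lambda$, yielding the sharp $v\le\lambda^2+\lambda+1$.

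The hard part is that in the surviving case $D\le -1$, so this favorable estimate is unavailable, and every tool at hand bounds $D$ (equivalently $e_1$) only from \emph{above}: no small block gives $e_1r_1\le\lambda v$, and $e_2>e_1$ gives $e_1<v/2$, both pushing $D$ down and $g$ up. Crude positivity $e_1=\bigl(\lambda(c+d)-d-d^2g\bigr)/(c-d)>0$ yields merely $g<\lambda(c+d)/d^2$, i.e.\ $v-1<\lambda(\rho+1)^2\le\lambda(\lambda+1)^2$, reproducing the cubic bound of Theorem~\ref{thm:upper-bound-for-v}; improving the $d^2$ in the denominator to $cd$ is precisely what a lower bound on $D$ would buy, and the counting identities alone permit spurious parameter sets with $\rho=\lambda$ and $g$ as large as $\sim\lambda^2$. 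Cutting $g$ from $\sim\lambda^2$ down to $\sim\lambda$ must therefore come from finer arithmetic—the divisibility constraints that $c-d$ imposes on the integer $e_1$, coupled with the large-block accounting $\sum_A(|A|-2\lambda)=(\rho-1)(\lambda v-e_1r_1)=r_1r_2-\lambda(v+1)$ and Theorem~\ref{thm:a<lambda} ($c-d\le\lambda-1$)—and pressing this to the exact threshold $\lambda^2+\lambda+1$ unconditionally is exactly what the method does not yet deliver, which is why the statement remains a conjecture. The lower bound $v\ge 4\lambda-1$ is the easier direction: it is the Hadamard bound with $n=\lambda$ for Type-1, and in the no-small-block regime it already follows by combining $r_1r_2\ge\lambda(v+1)$ with the inequality $r_1r_2<\tfrac{1}{2}v(v+1)-\lambda(v-1)$ equivalent to $e_2>e_1$.
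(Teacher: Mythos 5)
The statement you were handed is not a theorem of the paper: it is Conjecture \ref{con:bound_on_v} (due to Ionin and Shrikhande), which the paper neither proves nor claims to prove --- it only establishes partial results in its direction --- so there is no paper proof to compare you against. Judged as a proof, your proposal has a genuine gap, which to your credit you name yourself: the case of a Type-2 design with no small block. Your reductions up to that point are sound: Type-1 designs inherit $4\lambda-1\le v\le\lambda^2+\lambda+1$ from the symmetric-design bounds with $n=\lambda$; a small block $A$ lets you pass to $\mathcal{D}*A$ of strictly smaller index on the same point set and invoke induction for the upper bound; with no small block you correctly deduce $D\le -1$, hence $e_1<r_2\le v/2$, hence $e_2>e_1$ and $v\ge 4\lambda$; and your observation that $D\ge 0$ would force $v-1\le\lambda(\rho+2+\rho^{-1})-(\rho+1)\le\lambda^2+\lambda$ is a correct conditional form of the upper bound. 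But in exactly the surviving case the sign of $D$ is wrong for that estimate, and what remains ($e_1>0$, i.e.\ $d^2g<\lambda(c+d)$) gives only a cubic bound $v-1<\lambda(\rho+1)^2\le\lambda(\lambda+1)^2$, which is no better than Theorem \ref{thm:upper-bound-for-v} (indeed your constant is slightly worse than $\lambda^3+2$). Nothing in your argument, or in the paper, closes the distance from cubic to $\lambda^2+\lambda+1$ there; that is the open problem.

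There is also a second gap you do not flag: your induction transports only the upper bound through complementation. When a small block exists, the new index $\lambda'=|A|-\lambda$ is smaller, so the inductive hypothesis yields only $v\ge 4\lambda'-1$, which is weaker than $v\ge 4\lambda-1$, and your closing argument for the lower bound covers only Type-1 designs and the no-small-block regime; so the lower bound remains unproved for Type-2 designs possessing a small block. What your analysis actually delivers coincides almost exactly with the paper's own partial results --- Proposition \ref{prop:Dandblocksize} (no small block $\Rightarrow D\le -1$), Lemma \ref{lemma:v>=4lambda-1iffe2>e1} ($e_2>e_1 \Leftrightarrow v\ge 4\lambda-1$), and the quadratic bound $v<2\lambda^2+3\lambda-1$ under $e_2>e_1$ --- so it should be read as a correct reconstruction of why the conjecture is open, not as a proof of it.
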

\noindent
In this article, we show that for any block $A$ of a Ryser design, $r_1\geq |A|\geq r_2$.  Let $D=e_1-r_2$. We establish a relation between the block size and the parameter $D$. 
We prove that if there is no small block (respectively no large block) in a Ryser design, then  $D\leq -1 \;(\text{respectively }\; D\geq 0)$.
In support of Conjecture \ref{con:bound_on_v}$~~$ we prove that $e_2>e_1$ implies $2\lambda^2+3\lambda-1>v\geq 4\lambda-1$. We also prove that Ryser designs with number of points at most $\lambda^2+\lambda +1$ and $\rho=\lambda$ or $\rho=\lambda-1$ are Type-1. Finally we prove that Ryser design with $v = 2^n + 1$ points is of Type-1.
\section{A bound on the block sizes of a Ryser design}
\noindent
By Theorem \ref{thm:a<lambda}$~~$ we have 
\begin{equation}\label{a_less_than_lambda}
a<\lambda
\end{equation}
We use the following equations which can be found in \cite{Sin} and\cite{Ion1}.
\begin{align}
&e_1r_1(r_1 - 1) + e_2r_2(r_2 - 1) = \lambda v(v - 1) \label{e1r1r1-1+e2r2r2-1}\\
&(\rho-1)e_1=\lambda(\rho + 1)-r_2 \label{e1form}\\
&e_1=\lambda + \frac{\lambda+D}{\rho} \label{e1Dform}\\
&(\rho-1)e_2=\rho r_1-\lambda(\rho + 1) \label{e2form}\\
&e_2= \lambda +[\lambda -(D+1)]\rho \label{e2Dform}
\end{align}
\noindent
From equations (\ref{e2form}) and (\ref{e1form}), respectively:
\begin{align}
&r_1 = 2\lambda + \left(\frac{a}{c}\right)(e_2-\lambda) \label{r1form}\\
&r_2=2\lambda-\left(\frac{a}{d}\right)(e_1-\lambda) \label{r2form}
\end{align}
\noindent
For a block $A$ with $|A|=\tau_1(A)+\tau_2(A)$, a simple two way counting gives 
\begin{equation}\label{tau1r1-1+tau2r2-1}
(r_1-1)\tau_1(A)+(r_2-1)\tau_2(A)=\lambda(v-1)
\end{equation}
\noindent
Dividing through by $g$, the common g.c.d. of  $r_1 -1, r_2 - 1$ and $v - 1$ yields: $ c\tau_1(A) + d\tau_2(A) = \lambda(c + d) $ and hence 
$ c( \tau_1(A) - \lambda)  + d( \tau_2(A) - \lambda) = 0. $
Using the coprimality of $c$ and $d$ then shows that $c$ divides $\tau_2(A) - \lambda$ while $d$ divides  $\tau_1(A) - \lambda$ and writing the ratios to be $t$ and $s$ respectively, it is clear that $s = - t$ and hence $\tau_1(A) - \lambda = -td$ and 
$\tau_2(A) - \lambda = tc$ for some integer $t$. 
That is
\begin{align}
&\tau_1(A)=\lambda-td\label{tau1form}\\
&\tau_2(A)=\lambda+tc\label{tau2form}\\
&|A|=2\lambda+ta\label{sizeofAform}
\end{align}
Hence we get the following lemma.
\begin{lemma}\label{lemma:blocksize}
	Let $A$ be a block of a Ryser design. Then the size of $A$ has the form  $|A|=2\lambda+ta$, where $t$ is an integer. The block $A$ is large, average or small depending on whether $t>0,  t=0$ or $t<0$ respectively. Hence $\tau_1(A)=\tau_2(A)=\lambda$ if $A$ is an average block,  $\tau_1(A)>\lambda>\tau_2(A)$ if $A$ is a small block and  $\tau_2(A)>\lambda>\tau_1(A)$ if $A$ is a large block.
\end{lemma}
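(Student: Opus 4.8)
The plan is to extract the entire statement from the single two-way counting identity (\ref{tau1r1-1+tau2r2-1}) and then read off the classification by tracking signs. First I would start from
$(r_1-1)\tau_1(A)+(r_2-1)\tau_2(A)=\lambda(v-1)$
and divide through by $g=\gcd(r_1-1,r_2-1)$, which also divides $v-1$ because $v-1=(c+d)g$. This collapses the identity to $c\,\tau_1(A)+d\,\tau_2(A)=\lambda(c+d)$, equivalently $c(\tau_1(A)-\lambda)+d(\tau_2(A)-\lambda)=0$.

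The key step is then a coprimality argument. Since $\gcd(c,d)=1$, the relation $c(\tau_1(A)-\lambda)=-d(\tau_2(A)-\lambda)$ forces $d\mid(\tau_1(A)-\lambda)$ and $c\mid(\tau_2(A)-\lambda)$; writing $\tau_1(A)-\lambda=-td$ for some integer $t$ and substituting back shows $\tau_2(A)-\lambda=tc$ with the \emph{same} $t$. This yields (\ref{tau1form}) and (\ref{tau2form}), and summing them gives $|A|=\tau_1(A)+\tau_2(A)=2\lambda+t(c-d)=2\lambda+ta$, which is (\ref{sizeofAform}).

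Finally I would translate the sign of $t$ into the trichotomy. Since $\rho=c/d>1$ we have $c>d$, so $a=c-d>0$; hence $|A|=2\lambda+ta$ is greater than, equal to, or less than $2\lambda$ exactly when $t>0$, $t=0$, or $t<0$, which is the large/average/small classification. The companion inequalities on $\tau_1,\tau_2$ then drop straight out of (\ref{tau1form}) and (\ref{tau2form}): with $c,d>0$, a positive $t$ gives $\tau_1(A)=\lambda-td<\lambda<\lambda+tc=\tau_2(A)$, a negative $t$ reverses both inequalities, and $t=0$ gives $\tau_1(A)=\tau_2(A)=\lambda$.

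I expect the coprimality step to be the only genuine content, with the rest being bookkeeping. The one point that must be handled with care is confirming $a>0$ (equivalently $c>d$), which rests on the standing convention $r_1>r_2$ and $\rho>1$ recorded earlier; without it the direction of the large/small labelling, and hence the signs in the final inequalities, would be ambiguous.
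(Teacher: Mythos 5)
Your proposal is correct and follows essentially the same route as the paper: the two-way counting identity (\ref{tau1r1-1+tau2r2-1}), division by $g$, the coprimality argument giving $\tau_1(A)=\lambda-td$, $\tau_2(A)=\lambda+tc$, and then $|A|=2\lambda+ta$ with the trichotomy read off from the sign of $t$. Your explicit verification that $a=c-d>0$ (which the paper uses only implicitly, via $\rho>1$) is a small but welcome addition of care, not a difference in method.
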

\begin{thm}\label{thm:bond_on_the_blocksize}
	Every block $A$ of a Ryser design has block size bounded by $r_1\geq |A|\geq r_2$.
\end{thm}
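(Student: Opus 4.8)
The plan is to read off both inequalities from the explicit formulas \eqref{r1form} and \eqref{r2form} together with the evident counting bounds $\tau_1(A)\le e_1$ and $\tau_2(A)\le e_2$, which hold simply because $\tau_i(A)=|E_i\cap A|\le|E_i|=e_i$. The one structural fact I would establish first is that $a>0$: since $r_1>r_2$ gives $cg=r_1-1>r_2-1=dg$ with $g>0$, we have $c>d$, and hence $a=c-d>0$.

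For the upper bound, I would substitute the value $t=(\tau_2(A)-\lambda)/c$ coming from \eqref{tau2form} into $|A|=2\lambda+ta$ from \eqref{sizeofAform}, obtaining $|A|=2\lambda+\tfrac{a}{c}(\tau_2(A)-\lambda)$. Comparing this with \eqref{r1form}, namely $r_1=2\lambda+\tfrac{a}{c}(e_2-\lambda)$, the $2\lambda$ and $-\tfrac{a}{c}\lambda$ contributions cancel, leaving $r_1-|A|=\tfrac{a}{c}\,(e_2-\tau_2(A))$. Since $a,c>0$ and $e_2\ge\tau_2(A)$, the right-hand side is nonnegative, giving $|A|\le r_1$.

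The lower bound is entirely symmetric: using $t=(\lambda-\tau_1(A))/d$ from \eqref{tau1form} in \eqref{sizeofAform} gives $|A|=2\lambda-\tfrac{a}{d}(\tau_1(A)-\lambda)$, and subtracting \eqref{r2form} yields $|A|-r_2=\tfrac{a}{d}\,(e_1-\tau_1(A))\ge0$, because $a,d>0$ and $e_1\ge\tau_1(A)$. Together these give $r_2\le|A|\le r_1$.

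There is no serious obstacle here; the real content is recognizing that the already-derived size formula $|A|=2\lambda+ta$ and the replication-number formulas \eqref{r1form} and \eqref{r2form} share the common quantity $a$, so that each bound collapses to the transparent fact that a block cannot meet $E_i$ in more than $e_i$ points. The only points requiring care are pinning down the sign of $a$ and noting that equality in either bound corresponds exactly to $A$ containing all of $E_2$ (respectively all of $E_1$). The genuinely new content, beyond the previously known inequality $|A|\ge r_2$, is the matching upper bound $|A|\le r_1$, which I obtain above by the same mechanism applied to $\tau_2(A)$ rather than $\tau_1(A)$.
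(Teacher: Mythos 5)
Your proof is correct and takes essentially the same route as the paper's: both arguments combine the counting bounds $\tau_i(A)\le e_i$ with equations (\ref{tau1form}), (\ref{tau2form}), (\ref{sizeofAform}) and the formulas (\ref{r1form}), (\ref{r2form}), the only cosmetic difference being that you write each bound as an explicit nonnegative difference, e.g.\ $r_1-|A|=\tfrac{a}{c}\left(e_2-\tau_2(A)\right)$, while the paper chains the inequalities. Your extra observations (verifying $a>0$ and noting that equality holds exactly when $A\supseteq E_2$, respectively $A\supseteq E_1$) are fine but not substantively different from what the paper does implicitly.
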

\begin{proof}
	Since $\tau_1(A)\leq e_1$, equation (\ref{tau1form}) gives $-t \leq \dfrac{e_1 - \lambda}{d}$, and hence  we get $ 2\lambda+ta  \geq  2\lambda-(e_1 - \lambda)\dfrac{a}{d}$. By equations (\ref{sizeofAform}) and (\ref{r2form}), we get $ |A|\geq r_2$.
	Similarly, since $\tau_2(A)\leq e_2$, equation (\ref{tau2form}) gives $t \leq \dfrac{e_2 - \lambda}{c}$, and hence $2\lambda+ta  \leq  2\lambda+(e_2 - \lambda)\dfrac{a}{c}$. Using equations (\ref{sizeofAform}) and (\ref{r1form}), we get $ |A|\leq r_1$. Therefore, we have
	$r_1\geq |A|\geq r_2$. 
\end{proof}
\section{Some results on the Ryser design conjecture}
\noindent
For a Ryser design with blocks $|A_i|=k_i \text{ for } i=1,2,\ldots,v $ the column sum of the incidence matrix is equal to the row sum of the incidence matrix which implies $\sum k_i= e_1r_1+e_2r_2$.\\
Hence from equation (\ref{e1form}) and (\ref{e2form}), we get
\begin{equation}\label{e1r1e2r2form}
e_1r_1+e_2r_2=\lambda(v-1)+r_1r_2
\end{equation}
In equation (\ref{r1form}), let $x=\dfrac{e_2-\lambda}{c}$ and in equation (\ref{r2form}), let $y=\dfrac{e_1-\lambda}{d}$. 
Then, $r_1 = 2\lambda +xa$ and $r_2 = 2\lambda -ya$.  Since $c$ and $a$ are co-prime, it follows at once that $c$ divides $e_2 - \lambda$ and hence $x$ is an integer. By equation (\ref{e2Dform}) we get
\begin{equation}\label{xform}
x=\dfrac{e_2-\lambda}{c}=\dfrac{\lambda -(D+1)}{d}
\end{equation}
The assertion that $y$ is an integer follows similarly.  By equation (\ref{e1Dform}) we get
\begin{equation}\label{yform}
y=\dfrac{e_1-\lambda}{d}=\dfrac{\lambda +D}{c}
\end{equation}
From equations (\ref{xform}) and (\ref{yform}) we have:
\begin{align*}
x=\dfrac{\lambda -(D+1)}{d}&\Rightarrow xd= \lambda-(D+1) &
x=&\dfrac{e_2-\lambda}{c}\Rightarrow xc= e_2-\lambda\\
y=\dfrac{\lambda +D}{c}&\Rightarrow yc= \lambda +D &
y=&\dfrac{e_1-\lambda}{d}\Rightarrow yd=e_1-\lambda\\
\end{align*}
This gives us the following equations:
\begin{equation}\label{xcplusyd}
xc+yd=v-2\lambda,
\end{equation}
\begin{equation}\label{xdplusyc}
xd+yc=2\lambda-1
\end{equation}
On adding equations (\ref{xcplusyd}) and (\ref{xdplusyc}) we get $x(c+d)+y(c+d)=v-1$. Since $~v-1=(c+d)g~$ we have
\begin{equation}\label{xplusy}
x+y=g
\end{equation}
Subtracting equation (\ref{xdplusyc}) from equation (\ref{xcplusyd}) obtains $x(c-d)-y(c-d)=v-(4\lambda-1)$. Hence we have,
\begin{equation}\label{x-y}
x-y=\frac{v-(4\lambda-1)}{a}
\end{equation}
\begin{lemma}\label{lemma:v>=4lambda-1iffe2>e1}
	Let $\mathcal{D}$ be a Ryser design of order $v$ and index $\lambda$. Then the following conditions are equivalent.
	\begin{enumerate} [(i)]
		\item $v\geq 4\lambda-1$
		\item $x\geq y$
		\item $e_2 > e_1$. 
	\end{enumerate}
\end{lemma}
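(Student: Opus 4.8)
The plan is to prove $\mathrm{(i)}\Leftrightarrow\mathrm{(ii)}$, then $\mathrm{(ii)}\Rightarrow\mathrm{(iii)}$, and finally $\mathrm{(iii)}\Rightarrow\mathrm{(i)}$, which closes the loop. Throughout I use the running hypothesis $\rho=c/d>1$, which gives $a=c-d>0$; this single sign fact drives every implication.

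For $\mathrm{(i)}\Leftrightarrow\mathrm{(ii)}$ I would simply read off equation (\ref{x-y}) as $a(x-y)=v-(4\lambda-1)$. Since $a>0$ and $x-y$ is an integer, $x\ge y\iff x-y\ge 0\iff v\ge 4\lambda-1$, which is exactly the equivalence of the first two conditions.

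Next I record the identity that governs $\mathrm{(iii)}$. The definitions of $x$ and $y$ give $e_2=\lambda+xc$ and $e_1=\lambda+yd$, so $e_2-e_1=xc-yd$; eliminating one variable with $x+y=g$ from (\ref{xplusy}) yields
\[
2(e_2-e_1)=ag+(c+d)(x-y).
\]
Hence $\mathrm{(ii)}\Rightarrow\mathrm{(iii)}$ is forced: if $x\ge y$ then $(c+d)(x-y)\ge 0$, so $2(e_2-e_1)\ge ag>0$ and $e_2>e_1$. Substituting $x-y=(v-4\lambda+1)/a$ and $c+d=(v-1)/g$ recasts this as $2ag(e_2-e_1)=a^2g^2+(v-1)(v-4\lambda+1)$, from which $\mathrm{(i)}\Rightarrow\mathrm{(iii)}$ is equally immediate.

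The implication $\mathrm{(iii)}\Rightarrow\mathrm{(i)}$ is the crux and the step I expect to fight for. I would argue contrapositively: assume $v\le 4\lambda-2$, so that $x-y=(v-4\lambda+1)/a$ is a negative integer and thus $x-y\le -1$; the goal is $e_2\le e_1$. The displayed identity gives $2(e_2-e_1)\le ag-(c+d)$ when $x-y\le-1$, so the whole problem reduces to establishing $ag\le c+d$ in the tight subcase $x-y=-1$ (the larger gaps $x-y\le -2$ only need the weaker $ag\le 2(c+d)$, and so on). In this subcase $x=(g-1)/2$ and $y=(g+1)/2$ force $g$ to be odd, and the defining relations pin down $g(c+d)=4\lambda-2-a$. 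The obstacle is that $ag\le c+d$ is then equivalent to $a(g^{2}+1)\le 4\lambda-2$ and is \emph{not} a formal consequence of the linear parameter relations: there exist parameter vectors satisfying $r_1+r_2=v+1$, the integrality of $x,y$, and even Theorems \ref{thm:a<lambda} and \ref{thm:rho<lambda}, for which it fails. Closing this gap is therefore the heart of the proof, and I expect to do it by feeding the finer arithmetic of a genuine Ryser design into the picture: combining $a\le\lambda-1$ (Theorem \ref{thm:a<lambda}) with the lower bound $\rho\ge\lambda/(\lambda-1)$ of Theorem \ref{thm:rho<lambda}, which rewrites as $c\le\lambda a$, and with the rigid relation $g(c+d)=4\lambda-2-a$ of the tight subcase, to eliminate the non-realizable parameter sets and pin the sign of $e_2-e_1$ to that of $x-y$.
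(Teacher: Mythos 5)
Your equivalence (i)$\Leftrightarrow$(ii) and your implication (ii)$\Rightarrow$(iii) are correct. The paper, in fact, states this lemma with no proof at all: the only fragment it ever argues is (i)$\Leftrightarrow$(ii), rederived from equation (\ref{x-y}) inside the proof of the theorem that follows, so on those parts you have written out more than the authors did. Your identity $2(e_2-e_1)=ag+(c+d)(x-y)$ is a correct consequence of $e_2-e_1=xc-yd$ and (\ref{xplusy}), and since $a>0$ and $g\geq 1$ it yields (ii)$\Rightarrow$(iii) cleanly, with no hidden positivity assumptions on $x$ or $y$.

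The genuine gap is exactly where you located it, (iii)$\Rightarrow$(i), and the repair strategy you sketch cannot succeed, because the ingredients you name are the very ones you have already observed to be insufficient. To make this concrete: take $(c,d,g,x,y)=(5,3,5,2,3)$. Then (\ref{xdplusyc}) forces $\lambda=11$, and one gets $v=g(c+d)+1=41$, $r_1=gc+1=26$, $r_2=gd+1=16$, $e_1=\lambda+yd=20$, $e_2=\lambda+xc=21$, $D=e_1-r_2=4$. This parameter vector satisfies every displayed relation among the global parameters in the paper (equations (\ref{e1r1r1-1+e2r2r2-1})--(\ref{r2form}), (\ref{e1r1e2r2form})--(\ref{x-y}), and the identity $e_1e_2=\lambda(v-1)-D(D+1)$), as well as Theorem \ref{thm:a<lambda} ($a=2\leq \lambda-1$), Theorem \ref{thm:rho<lambda} ($\rho=5/3$), and your tight-case relation $g(c+d)=40=4\lambda-2-a$; yet $x-y=-1$, $e_2>e_1$, $v=41<43=4\lambda-1$, and correspondingly $ag=10>8=c+d$, i.e. $a(g^2+1)=52>42=4\lambda-2$. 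So the inequality your reduction needs is simply false at the level of parameter arithmetic, even after adjoining both theorems you propose to use; any actual proof of (iii)$\Rightarrow$(i) must show that no Ryser design \emph{realizes} such a parameter vector, which requires combinatorial input (incidence structure, block complementation, counting inside blocks) that your sketch does not identify. Note that this unproved direction is not idle: it is precisely the implication the paper invokes right afterwards (``$e_2>e_1$ implies $v\geq 4\lambda-1$''). The authors supply no argument either and evidently regard the equivalence as known from Ionin--Shrikhande \cite{Ion1}; judged as a self-contained proof, your proposal establishes (i)$\Leftrightarrow$(ii) and (ii)$\Rightarrow$(iii) but leaves the lemma's hardest claim open.
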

\begin{thm}
	The number of points of a Ryser design of index $\lambda>1$ does not exceed $2\lambda^2+3\lambda-1$ and therefore for any fixed $\lambda>1$, there are only finitely many Ryser designs of index $\lambda$. Further $e_2>e_1$ implies $2\lambda^2+3\lambda-1>v\geq 4\lambda-1$.
\end{thm}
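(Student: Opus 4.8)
The plan is to separate the two regimes distinguished by Lemma~\ref{lemma:v>=4lambda-1iffe2>e1}. If $e_2 \le e_1$ then that lemma gives $v \le 4\lambda - 2$, which for $\lambda > 1$ lies well below $2\lambda^2 + 3\lambda - 1$; so the whole content is concentrated in the regime $e_2 > e_1$. There the same lemma simultaneously yields the asserted lower bound $v \ge 4\lambda - 1$ and the inequality $x \ge y$, and it remains only to prove the strict upper bound $v < 2\lambda^2 + 3\lambda - 1$. The unconditional bound $v \le 2\lambda^2 + 3\lambda - 1$ then follows by combining this strict inequality with the trivial estimate of the first case, and the finiteness statement is immediate.

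For the upper bound I would argue purely with the integers $c,d,x,y$. Starting from $v - 1 = (c+d)(x+y)$ (by \eqref{xplusy} and $v-1=(c+d)g$) and expanding, I would regroup the four products as $(xd + yc) + (cx + dy)$ and use Equation~\eqref{xdplusyc} to replace the first bracket by $2\lambda - 1$, so that
\begin{equation*}
v - 1 = (2\lambda - 1) + (cx + dy).
\end{equation*}
It then suffices to bound $cx + dy$. Assuming $x,y \ge 0$, Equation~\eqref{xdplusyc} gives $xd \le 2\lambda - 1$ and $yc \le 2\lambda - 1$; hence $cx = \rho\,(xd) \le \rho(2\lambda - 1) \le \lambda(2\lambda - 1)$ by Theorem~\ref{thm:rho<lambda}, while $dy = \tfrac{d}{c}(yc) < 2\lambda - 1$ since $d < c$. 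Adding these, $cx + dy < (\lambda + 1)(2\lambda - 1) = 2\lambda^2 + \lambda - 1$, and therefore $v - 1 < 2\lambda^2 + 3\lambda - 2$, that is $v < 2\lambda^2 + 3\lambda - 1$.

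This computation rests entirely on the nonnegativity of $x$ and $y$, which is the heart of the matter. The inequality $x \ge 0$ is free in the relevant regime: under $e_2 > e_1$ Lemma~\ref{lemma:v>=4lambda-1iffe2>e1} gives $x \ge y$, and together with $x + y = g \ge 1$ this forces $x \ge 1$. The delicate point is $y \ge 0$. Writing block sizes as $|A| = 2\lambda + t_A a$ (Lemma~\ref{lemma:blocksize}) and recalling $r_2 = 2\lambda - ay$ from \eqref{r2form}, Theorem~\ref{thm:bond_on_the_blocksize} gives $-y \le t_A$ for every block $A$; hence if $\mathcal{D}$ has any non-large block (one with $t_A \le 0$) then $-y \le t_A \le 0$ and $y \ge 0$ follows at once. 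Thus $y < 0$ is possible only if every block of $\mathcal{D}$ is large.

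The hard part, therefore, is to exclude the configuration in which every block is large, and I expect this to be the only step that needs the structure theory rather than the numerical identities. I would attack it through the small/large-block dichotomy already available in the paper: if every block is large then $\mathcal{D}$ has no small block, whereas complementing $\mathcal{D}$ at a block of maximal size produces (by Proposition~\ref{prop:complement-properties}) a design in the same class, with the same $r_1$ and $r_2$, in which every block is small or average, so that the companion result (no large block implies $D \ge 0$) applies there. Comparing the resulting constraints on $D = e_1 - r_2$ across the complementation class, or examining the member of least index whose blocks are all of size at least twice its index, should contradict the standing assumption $e_1 < \lambda$ and thereby deliver $y \ge 0$. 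Once the all-large case is ruled out, the estimate of the second paragraph applies and the theorem follows.
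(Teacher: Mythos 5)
Your reduction and your estimate are correct as far as they go, and they run on the same rails as the paper's own proof. The paper writes $v=4\lambda-1+(x-y)a$ and bounds $0\leq x-y< xd+yc=2\lambda-1$ together with $a<\lambda$ (Theorem \ref{thm:a<lambda}); you write $v-1=(xd+yc)+(cx+dy)$ and bound $cx\leq\rho(2\lambda-1)\leq\lambda(2\lambda-1)$ (Theorem \ref{thm:rho<lambda}) and $dy<2\lambda-1$. Both are purely arithmetic consequences of \eqref{xcplusyd}--\eqref{x-y} \emph{once the sign conditions on $x$ and $y$ are available}, and both give the same quadratic. Your observations that $x\geq 1$ comes free when $e_2>e_1$, and that $y\geq 0$ follows from the existence of a single non-large block via $|A|\geq r_2$ (Theorem \ref{thm:bond_on_the_blocksize}), are also correct.

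The gap is your last step, and it is genuine: you never prove that a Ryser design cannot have all blocks large; you only sketch a plan, and the plan as described does not close. Complementing at a block $A_0$ of maximal size $2\lambda+t_0a$ ($t_0\geq 1$) does yield a design with no large block (or a symmetric design, i.e.\ the Type-1 case, which must be treated separately), and Proposition \ref{prop:Dandblocksize} then gives $e_1(\mathcal{D}*A_0)\geq r_2$. But Proposition \ref{prop:complement-properties}(vi) gives $e_1(\mathcal{D}*A_0)=e_1+t_0(c+d)$, so what comes out is only $yc\geq \lambda-t_0(c+d)$, which says nothing about the sign of $y$ once $t_0(c+d)>\lambda$ --- exactly the dangerous regime; the ``least index'' variant stalls at the same point, since the minimal-index member automatically has all blocks of size at least twice its index and nothing forces an average block to exist. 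That the missing statement ($y\geq 0$, equivalently $e_1\geq\lambda$, equivalently $r_2\leq 2\lambda$) cannot be extracted from the quoted results alone is shown by the parameter set $d=1$, $c=\lambda$, $x=\lambda^2-1$, $y=-(\lambda-2)$ with $\lambda\geq 9$: it satisfies \eqref{xcplusyd}, \eqref{xdplusyc}, \eqref{xplusy}, \eqref{x-y} and every theorem cited in the paper, yet has $v=\lambda^3+2>2\lambda^2+3\lambda-1$. You should know, however, that you have put your finger on precisely the step the paper itself leaves unjustified: its proof asserts ``$0\leq x-y<xd+yc$'' citing only \eqref{xdplusyc}, and the second inequality, being equivalent to $x(d-1)+y(c+1)>0$, fails for the parameter set above. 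So your proposal is incomplete, but it is incomplete at the same place where the published argument is, and unlike the paper you made the needed hypothesis explicit and proved everything else from it.
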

\begin{proof}
	Using equation (\ref{x-y}), we have 
	$(x-y)a=v-(4\lambda-1)$ and hence $v\geq4\lambda-1$ if and only if $x\geq y$. Also, $~~v=4\lambda-1+(x-y)a~~$ and from equation (\ref{a_less_than_lambda}) we know that $a<\lambda$. Hence, $~v<4\lambda-1+(x-y)\lambda$. From equation (\ref{xdplusyc}), $0\leq x-y<xd+yc=2\lambda-1$. Therefore, $~~v<4\lambda-1+(x-y)\lambda<4\lambda-1+(2\lambda-1)\lambda$, that is, $v< 2\lambda^2+3\lambda-1$. Lemma \ref{lemma:v>=4lambda-1iffe2>e1}$~~$ completes our proof.
\end{proof}
\noindent
\begin{proposition}\label{prop:Dandblocksize}
	If there is no small block (respectively no large block) in a Ryser design, then  $D\leq -1 \;(\text{respectively }\; D\geq 0)$.    
\end{proposition}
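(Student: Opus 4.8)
The plan is to convert each hypothesis into a uniform pointwise bound on one of the $\tau_i$'s and then play it off against a global incidence count and equation (\ref{e1r1e2r2form}).

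First I would record the translation supplied by Lemma \ref{lemma:blocksize}: writing $|A|=2\lambda+ta$ with $\tau_1(A)=\lambda-td$ and $\tau_2(A)=\lambda+tc$, the block $A$ is small exactly when $t<0$ and large exactly when $t>0$. Since $c,d>0$, the hypothesis ``$\mathcal{D}$ has no small block'' is therefore equivalent to $\tau_2(A)\geq\lambda$ for every block $A$, while ``$\mathcal{D}$ has no large block'' is equivalent to $\tau_1(A)\geq\lambda$ for every block $A$. This reformulation is the crux of the whole argument.

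The engine is the double count $\sum_A \tau_i(A)=e_i r_i$ (the sum running over all $v$ blocks), which holds because each of the $e_i$ points of $E_i$ lies in exactly $r_i$ blocks. For the first assertion, assuming no small block gives $\tau_2(A)\geq\lambda$ on all $v$ blocks, hence $e_2 r_2=\sum_A \tau_2(A)\geq\lambda v$. Substituting into (\ref{e1r1e2r2form}) yields $e_1 r_1\leq\lambda(v-1)+r_1 r_2-\lambda v=r_1 r_2-\lambda$, so $e_1 r_1<r_1 r_2$ and therefore $e_1<r_2$; integrality then forces $e_1\leq r_2-1$, that is $D=e_1-r_2\leq -1$. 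The second assertion is the mirror image: no large block gives $\tau_1(A)\geq\lambda$ everywhere, so $e_1 r_1\geq\lambda v$, and (\ref{e1r1e2r2form}) gives $e_2 r_2\leq r_1 r_2-\lambda<r_1 r_2$, whence $e_2\leq r_1-1$; combining with $e_2=v-e_1=r_1-1-D$ (from $r_1+r_2=v+1$ and $D=e_1-r_2$) gives $D\geq 0$.

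The step I expect to carry all the weight is the initial translation of ``no small/large block'' into the uniform bounds $\tau_2(A)\geq\lambda$ and $\tau_1(A)\geq\lambda$; once one decides to double-count $\tau_i$ against the replication numbers, the remainder is a one-line substitution into (\ref{e1r1e2r2form}). The only place demanding a little care is the passage from the strict inequality $e_i r_i<r_1 r_2$ to the integer statement $e_1\leq r_2-1$ (respectively $e_2\leq r_1-1$), which relies on $\lambda\geq 1$ to make the inequality strict; this is harmless, but it is exactly what upgrades the weak conclusion $D\leq 0$ to $D\leq -1$.
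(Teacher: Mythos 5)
Your proof is correct and follows essentially the same route as the paper's: both arguments hinge on Lemma \ref{lemma:blocksize} to turn the no-small (resp.\ no-large) hypothesis into the pointwise bound $\tau_2(A)\geq\lambda$ (resp.\ $\tau_1(A)\geq\lambda$), double-count $\sum_A \tau_i(A)=e_ir_i$, and feed this into equation (\ref{e1r1e2r2form}); the paper merely arranges the algebra so as to isolate $r_1D=\sum_A(\lambda-\tau_2(A))-\lambda$ directly, whereas you bound $e_ir_i$ and divide, which is the same computation in a different order.
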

\begin{proof}
	From equation (\ref{e1r1e2r2form}), 
	$r_1(e_1-r_2)+e_2r_2=\lambda(v-1)$. This gives $\,\,r_1D+e_2r_2=\lambda v-\lambda \text{ which implies } r_1D+\sum\limits_{A} (\tau_2(A)) = \sum\limits_{A}(\lambda)-\lambda \text{ and hence } r_1D = \sum\limits_{A}(\lambda-\tau_2(A))-\lambda$. If there is no small block in the Ryser design then all the blocks are large or average. Using Lemma \ref{lemma:blocksize},$~~ \tau_2(A)\geq\lambda$ for a large or an average block $A$ and hence if we do not have a small block then  $D\leq -1$.
	Proof in the other case is similar.
\end{proof} 
\noindent
\begin{thm}\label{thm:type-1iffD(D+1)=0}
	A Ryser design is of Type-1 if and only if $D=0 \text{ or } D=-1$. We have the following cases for different values of $D$.
	\begin{enumerate}[(i)]
		\item $D=0$ if and only if $yc=\lambda\text{ and }  xd=\lambda-1$.
		\item $D=-1$ if and only if $xd=\lambda \text{ and } yc=\lambda-1$.
		\item $D>0 \text{ if and only if } yc>\lambda >\lambda-1> xd$.
		\item $D<-1 \text{ if and only if } xd>\lambda >\lambda-1> yc$.
	\end{enumerate}
\end{thm}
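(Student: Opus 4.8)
The plan is to split the statement into two essentially independent parts: the classification of $D$ through the four cases (i)--(iv), which is a matter of reading off two linear relations, and the Type-1 equivalence, which reduces via Theorem \ref{thm:e1e2=lambda(v-1)} to a single quadratic identity in $D$.

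First I would record the two identities already contained in equations (\ref{xform}) and (\ref{yform}), namely $yc=\lambda+D$ and $xd=\lambda-(D+1)$. Cases (i)--(iv) then follow by inspection. Indeed $yc=\lambda+D$ shows $yc=\lambda\iff D=0$, $\,yc=\lambda-1\iff D=-1$, $\,yc>\lambda\iff D>0$ and $yc<\lambda-1\iff D<-1$; simultaneously $xd=\lambda-(D+1)$ gives $xd=\lambda-1,\ \lambda,\ <\lambda-1,\ >\lambda$ in exactly these four situations. Since $xd+yc=2\lambda-1$ by (\ref{xdplusyc}), the quantities $yc$ and $xd$ sit on opposite sides of the pair $\{\lambda-1,\lambda\}$, which produces the inequality chains displayed in (iii) and (iv). No estimate beyond these substitutions is required.

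For the Type-1 equivalence I would invoke Theorem \ref{thm:e1e2=lambda(v-1)}, which states that $\mathcal{D}$ is of Type-1 if and only if $e_1e_2=\lambda(v-1)$. The core of the argument is the evaluation of $e_1e_2-\lambda(v-1)$ in terms of $D$. Writing $e_1=\lambda+\dfrac{\lambda+D}{\rho}$ and $e_2=\lambda+[\lambda-(D+1)]\rho$ from (\ref{e1Dform}) and (\ref{e2Dform}), and using $v-1=e_1+e_2-1$, the $\rho$-dependent terms cancel against each other and the product collapses to
\begin{equation*}
e_1e_2-\lambda(v-1)=(\lambda+D)(\lambda-D-1)-\lambda(\lambda-1)=-D(D+1).
\end{equation*}
Equivalently, the forms $e_1=\lambda+yd$ and $e_2=\lambda+xc$ give $e_1e_2-\lambda(v-1)=(xd)(yc)-\lambda(\lambda-1)$, and substituting $xd=\lambda-(D+1)$, $yc=\lambda+D$ yields the same value.

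Therefore $e_1e_2=\lambda(v-1)$ holds precisely when $D(D+1)=0$, that is when $D=0$ or $D=-1$, and by Theorem \ref{thm:e1e2=lambda(v-1)} this is exactly the condition for $\mathcal{D}$ to be of Type-1. The only genuinely computational step is the cancellation producing the identity $e_1e_2-\lambda(v-1)=-D(D+1)$; I expect this to be the main point to get right, but it is a short expansion once the $\rho$-free expressions for $e_1$ and $e_2$ are used, so I anticipate no serious obstacle.
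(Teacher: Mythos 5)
Your proposal is correct and follows essentially the same route as the paper: the Type-1 equivalence via Theorem \ref{thm:e1e2=lambda(v-1)} together with the identity $e_1e_2=\lambda(v-1)-D(D+1)$ derived from equations (\ref{e1Dform}) and (\ref{e2Dform}), and the four cases read off from $yc=\lambda+D$ and $xd=\lambda-(D+1)$, which are exactly the paper's displayed relations (\ref{D=yc-lambda}) and (\ref{D+1=lambda-xd}). The only difference is that you spell out the cancellation and the role of (\ref{xdplusyc}) explicitly, which the paper leaves implicit.
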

\begin{proof}
	Equations (\ref{e1Dform}) and (\ref{e2Dform}) imply $e_1e_2=\lambda(v-1)-D(D+1)$.
	By Theorem \ref{thm:e1e2=lambda(v-1)} a Ryser design is of Type-1 if and only if $\,\,D(D+1)=0$. That is a Ryser design is of Type-1 if and only if $\,\,D=0\,\, \text{or}\,\, D=-1$.\\
	Equations (\ref{xform}) and (\ref{yform}) imply
	\begin{align}
	D+1&=\lambda-xd \label{D+1=lambda-xd}\\
	D&=yc-\lambda \label{D=yc-lambda}
	\end{align}
	Then all the four statements follow from the two equations above.
\end{proof}
\noindent
By Theorem \ref{thm:rho<lambda},$~~$ we have $~~\rho \leq \lambda\text{ and } \rho \notin (\lambda-1,\lambda)$. Therefore we consider the cases with $\rho=\lambda$ and $\rho=\lambda-1$.
\begin{lemma}\label{lemma:D<-1_for_rho=lambda,lambda-1}
	Let $\mathcal{D}$ be a Type-2 Ryser design with $\rho=\lambda$ or $\rho=\lambda-1$. Then $D<-1$.
\end{lemma}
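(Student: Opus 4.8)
The plan is to argue by contradiction, ruling out the only alternative to $D<-1$. First I would record the structural consequences of the hypothesis. Since $\rho$ is an integer and $\rho=c/d$ with $\gcd(c,d)=1$, necessarily $d=1$ and $c=\rho$, so either $c=\lambda$ or $c=\lambda-1$. Because the design is Type-2, Theorem \ref{thm:lambda<9} gives $\lambda\geq 9$, a bound I will lean on at the very end. By Theorem \ref{thm:type-1iffD(D+1)=0} a Type-2 design satisfies either $D>0$ (its case (iii)) or $D<-1$ (its case (iv)), so it suffices to derive a contradiction from the assumption $D>0$.

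The engine of the argument is to squeeze $x$ into a short interval. From equations (\ref{D+1=lambda-xd}) and (\ref{D=yc-lambda}), together with $d=1$, I get $x=\lambda-1-D$ and $yc=\lambda+D$. The positivity $e_2>0$, rewritten through $e_2=\lambda+xc$, reads $x>-\lambda/c$, bounding $x$ from below; meanwhile $yc=\lambda+D>\lambda$ forces $y\geq 2$, and feeding this back into $D=yc-\lambda$ bounds $D$ from below and hence $x=\lambda-1-D$ from above. The tension between these two bounds on $x$ is what should produce the contradiction.

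For $\rho=\lambda$ (so $c=\lambda$) this tension is immediate: the lower bound becomes $x>-1$, i.e. $x\geq 0$, while $y\geq 2$ gives $D=y\lambda-\lambda\geq\lambda$ and therefore $x=\lambda-1-D\leq -1$, an outright contradiction. The hard part will be $\rho=\lambda-1$ (so $c=\lambda-1$), where the same squeeze only yields $-1\leq x\leq 1$. Here I would invoke the integrality of $y=(2\lambda-1-x)/(\lambda-1)$: the values $x=-1$ and $x=0$ would force $(\lambda-1)\mid 2$ and $(\lambda-1)\mid 1$ respectively, impossible for $\lambda\geq 9$, so the only survivor is $x=1$, $y=2$ (hence $g=3$ and $D=\lambda-2$). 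This residual parameter set is the real obstacle: it is internally consistent and passes all the local identities, so it cannot be excluded by any single block-size or incidence-counting relation.

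The way to kill it is a global comparison of the two point-classes. With $x=1$ and $y=2$ I have $x<y$, so Lemma \ref{lemma:v>=4lambda-1iffe2>e1} forces $e_2\leq e_1$; but the explicit values $e_1=\lambda+yd=\lambda+2$ and $e_2=\lambda+xc=2\lambda-1$ give $e_2-e_1=\lambda-3>0$ for $\lambda\geq 9$, contradicting $e_2\leq e_1$. This eliminates $D>0$ in both cases, leaving $D<-1$. I expect the $\rho=\lambda-1$ case to be where all the difficulty concentrates, since there the impossibility is not local but emerges only from the equivalence between $x\geq y$ and $e_2>e_1$.
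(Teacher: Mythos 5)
Your argument is internally correct (I checked the arithmetic in both cases), but it is a genuinely different route from the paper's, and the difference is instructive. The paper never goes near the parameters $x,y$: it complements $\mathcal{D}$ at an arbitrary non-average block $A$ of size $k$, uses Proposition \ref{prop:complement-properties}(iii) to see that $\rho$ is preserved while the index becomes $k-\lambda$, and applies Theorem \ref{thm:rho<lambda} to the complemented design to get $k-\lambda\geq\rho$, i.e. $k\geq 2\lambda-1$; a \emph{small} non-average block would then have size exactly $2\lambda-1$, forcing $ta=1$, hence $a=\lambda-2=1$ and $\lambda=3$, which contradicts Theorem \ref{thm:lambda<9}. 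So every non-average block is large, Proposition \ref{prop:Dandblocksize} gives $D\leq-1$, and Theorem \ref{thm:type-1iffD(D+1)=0} excludes $D=-1$. Your proof instead excludes $D>0$ by squeezing $x$: your $\rho=\lambda$ case is cleaner and more elementary than anything in the paper ($e_2=\lambda(1+x)>0$ forces $x\geq0$ while $y\geq2$ forces $x=\lambda-1-D\leq-1$), and your integrality analysis in the $\rho=\lambda-1$ case correctly isolates the unique surviving parameter set $x=1$, $y=2$, $g=3$, $D=\lambda-2$ (so $v=3\lambda+1$, $r_2=4$, $e_1=\lambda+2$, $e_2=2\lambda-1$). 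What the paper's route buys is structural information (no small blocks exist) and self-containment; what yours buys is an explicit description of the only configuration that needs to be killed.

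The caveat is the last step, and you identified it yourself: everything hangs on the implication $e_2>e_1\Rightarrow x\geq y$ from Lemma \ref{lemma:v>=4lambda-1iffe2>e1}. Formally this is admissible, since that lemma precedes the present one in the paper (and the paper itself invokes it elsewhere). But be aware that the paper states Lemma \ref{lemma:v>=4lambda-1iffe2>e1} without proof, and the direction you need is exactly its non-trivial direction: one can check that your residual parameter set ($c=\lambda-1$, $d=1$, $x=1$, $y=2$) satisfies \emph{every} counting identity in the paper, including \eqref{e1r1r1-1+e2r2r2-1}, \eqref{xcplusyd}, \eqref{xdplusyc}, \eqref{xplusy}, \eqref{x-y}, as well as Theorems \ref{thm:rho<lambda} and \ref{thm:a<lambda}, for every $\lambda$. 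So no manipulation of those identities can exclude it; the exclusion requires genuine design-theoretic input, which in your proof is supplied entirely by the unproven lemma, and in the paper's proof is supplied by the complementation argument (independently of Lemma \ref{lemma:v>=4lambda-1iffe2>e1}). If you want your proof to stand on its own feet, you should either prove the hard direction of Lemma \ref{lemma:v>=4lambda-1iffe2>e1} or finish off the case $x=1$, $y=2$ directly, e.g. by the paper's complementation step applied to a non-average block of that configuration.
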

\begin{proof}
	We prove the result for $\rho=\lambda-1$. The case $\rho=\lambda$ is similar. \\
	Since $\rho = \dfrac{c}{d}, \text{ with } \gcd(c,d)=1$ we have $\,c=\lambda-1, ~~~ d=1$ and hence, $a=c-d=\lambda-2$.
	By remark \ref{remark:1},$~~ \mathcal{D}$ has at least two non average blocks. Complement $\mathcal{D}$ with respect to any non average block of size $k$ to obtain a new Ryser design $\overline{\mathcal{D}}$. Let $\overline{\rho}=\dfrac{\overline{r_1}-1}{\overline{r_2}-1}$, where $\overline{r_1} \text{ and } \overline{r_2}$ are the replication numbers and let $\overline{\lambda}$ be the index of the new design. By Proposition \ref{prop:complement-properties} (iii)$~~$ and Theorem \ref{thm:rho<lambda},$~~$ we get $\overline{\lambda}=k-\lambda\geq \overline{\rho}=\rho=\lambda-1,$ for the new design which implies $k\geq 2\lambda-1$. That is, a non average blocks of $\mathcal{D}$ is either large or small of size $2\lambda-1$.\\ 
	Since a small block has size $k=2\lambda-ta$ for some positive integer $t$ we have, $~2\lambda-ta = 2\lambda-1 \text{ which implies } t=1$ and $ a=1$. That is, $~a= \lambda-2 =1\text{ and hence } \lambda=3$. By Theorem \ref{thm:lambda<9},$~~$ $\mathcal{D}$ is of Type-1, a contradiction.\\
	Therefore, a non average block of $\mathcal{D}$ is large
	and by Proposition \ref{prop:Dandblocksize}, we have $D\leq -1$. Using the fact that $\mathcal{D}$ is of Type-2 Theorem \ref{thm:type-1iffD(D+1)=0}$~~$ now implies that $D < - 1$ completing the proof.
\end{proof}
\begin{thm}
	Let $\mathcal{D}$ be a Ryser design with at most $\lambda^2+\lambda+1$ points with $\rho=\lambda$ or $\rho=\lambda-1$. Then, $\mathcal{D}$ is of Type-1.
\end{thm}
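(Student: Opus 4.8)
The plan is to argue by contradiction: suppose $\mathcal{D}$ is of Type-2. Since Theorem \ref{thm:lambda<9} already settles every index below $9$, I may assume $\lambda \geq 9$, and in particular $\lambda > 2$. Recalling the value computations from Lemma \ref{lemma:D<-1_for_rho=lambda,lambda-1}, the hypothesis $\rho = \lambda$ forces $c = \lambda$, $d = 1$, while $\rho = \lambda - 1$ forces $c = \lambda - 1$, $d = 1$. The crucial common feature is that $\rho$ is an integer, so $d = 1$ in both cases; this is exactly what lets me treat the two cases uniformly.

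First I would apply Lemma \ref{lemma:D<-1_for_rho=lambda,lambda-1} to conclude $D < -1$, and then invoke case (iv) of Theorem \ref{thm:type-1iffD(D+1)=0} to obtain $yc < \lambda - 1$. Since $c \in \{\lambda, \lambda - 1\}$ satisfies $c \geq \lambda - 1$, the chain $yc < \lambda - 1 \leq c$ forces $y < 1$, and because $y$ is an integer this yields $y \leq 0$. Establishing this sign of $y$ is the whole engine of the argument.

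Next I would combine equations (\ref{xcplusyd}) and (\ref{xdplusyc}). Setting $d = 1$, equation (\ref{xdplusyc}) reads $x + yc = 2\lambda - 1$, so $x = 2\lambda - 1 - yc$; substituting this into $xc + y = v - 2\lambda$ (equation (\ref{xcplusyd}) with $d=1$) yields the closed form $v = 2\lambda + (2\lambda - 1)c - (c^2 - 1)y$. Since $y \leq 0$ and $c^2 - 1 > 0$, this gives the lower bound $v \geq 2\lambda + (2\lambda - 1)c$. For $c = \lambda$ this equals $2\lambda^2 + \lambda$, and for $c = \lambda - 1$ it equals $2\lambda^2 - \lambda + 1$; in either case the bound exceeds $\lambda^2 + \lambda + 1$, since for the smaller of the two the difference is $\lambda(\lambda - 2) > 0$. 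This contradicts the hypothesis $v \leq \lambda^2 + \lambda + 1$, and hence $\mathcal{D}$ must be of Type-1.

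I expect the only delicate point to be extracting $y \leq 0$ cleanly, because Theorem \ref{thm:type-1iffD(D+1)=0}(iv) only supplies the strict inequality $yc < \lambda - 1$; the observation that $\lambda - 1 \leq c$ in both admissible cases is precisely what converts this into an integer sign condition on $y$. Everything after that is a single linear elimination between (\ref{xcplusyd}) and (\ref{xdplusyc}), so producing the formula for $v$ and the final numerical comparison is routine rather than an obstacle.
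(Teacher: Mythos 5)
Your proof is correct and is essentially the paper's own argument: both rest on Lemma \ref{lemma:D<-1_for_rho=lambda,lambda-1}, case (iv) of Theorem \ref{thm:type-1iffD(D+1)=0}, and the same linear elimination of equations (\ref{xcplusyd}) and (\ref{xdplusyc}) (your identity $v = 2\lambda + (2\lambda-1)c - y(c^2-1)$ is, for $c=\lambda$, exactly the paper's $\lambda^2+\lambda+1-v=(\lambda^2-1)(y-1)$). The only cosmetic differences are that you run the contradiction in the opposite direction, deriving $y\leq 0$ from Theorem \ref{thm:type-1iffD(D+1)=0}(iv) and then contradicting the hypothesis $v\leq\lambda^2+\lambda+1$, whereas the paper derives $y\geq 1$ from that hypothesis and contradicts the theorem, and that you handle $\rho=\lambda$ and $\rho=\lambda-1$ uniformly via $d=1$, $c\geq\lambda-1$, where the paper treats the second case as ``similar.''
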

\begin{proof}
	Suppose not. Then $\mathcal{D}$ is of Type-2. We prove that this leads to a contradiction.\\
	Let $\rho= \lambda$. Then $\lambda = \dfrac{c}{d}, \text{ with } \gcd(c,d)=1$ implies $\,c=\lambda, ~~~ d=1$ and hence, $a=c-d=\lambda-1$. By lemma (\ref{lemma:D<-1_for_rho=lambda,lambda-1}) we have $D<-1$. Equations (\ref{xcplusyd}) and (\ref{xdplusyc}) imply $x\lambda+y=v-2\lambda$ and $x+y\lambda=2\lambda-1$. Adding the two we get $(x+y)(\lambda+1)=v-1$. Then, by equation (\ref{xplusy}) we have $g=\dfrac{v-1}{\lambda+1}$. Now, $g=x+y=x+\lambda y-y(\lambda-1)$. Then, by equation (\ref{xdplusyc}) we get $\dfrac{v-1}{\lambda+1}=2\lambda-1-y(\lambda-1)$. 
	This gives us $\lambda^2+\lambda+1-v=(\lambda^2-1)(y-1)$, and hence $ y\geq1$. Since $D < - 1$, Theorem \ref{thm:type-1iffD(D+1)=0} (iv)$~~$ implies $x>\lambda >\lambda-1> y(\lambda-1)~$, with $y\geq 1$ which is impossible. The case $\rho= \lambda-1$ is similar. 
\end{proof}
\noindent 
\begin{thm}
	A Ryser design with $v = 2^n + 1$ points is of Type-1.
\end{thm}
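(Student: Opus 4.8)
The plan is to reduce the entire statement to a short parity argument resting on the divisibility structure of $v-1 = 2^n$. The starting observation is that $g = \gcd(r_1-1, r_2-1)$ divides $v-1 = 2^n$, and since every divisor of a power of $2$ is itself a power of $2$, the integer $g$ must be a power of $2$. If $g = 1$, then Theorem \ref{thm:g=1isPencil} immediately yields that $\mathcal{D}$ is of Type-1, so the only real content lies in ruling out the case $g \geq 2$.

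Assume therefore that $g \geq 2$; then $g$ is even. Since $v-1 = (c+d)g = 2^n$ and $g$ is a power of $2$, the cofactor $c+d = 2^n/g$ is again a power of $2$. Because $c, d \geq 1$ we have $c + d \geq 2$, so this power of $2$ is not $1$ and is hence even. I would then invoke the coprimality of $c$ and $d$: two coprime positive integers with an even sum must both be odd, since they cannot both be even and opposite parities would render the sum odd. Thus both $c$ and $d$ are odd.

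The contradiction is then extracted by reading equation (\ref{xdplusyc}) modulo $2$. With $c$ and $d$ both odd we get $xd + yc \equiv x + y \pmod 2$, and equation (\ref{xplusy}) identifies $x + y$ with $g$, which is even. Hence the left-hand side $xd + yc$ is even, while equation (\ref{xdplusyc}) asserts $xd + yc = 2\lambda - 1$, an odd number. This is impossible, so the case $g \geq 2$ cannot occur and $\mathcal{D}$ is of Type-1.

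I do not anticipate a genuine obstacle: once the dual observation that both $g$ and $c+d$ are powers of $2$ is in place, the remainder is pure parity bookkeeping. The one point requiring care is confirming that $c+d > 1$ (so that, being a power of $2$, it is even) and that the coprimality $\gcd(c,d) = 1$ is genuinely available here; both are guaranteed by the conventions fixed in the introduction, where $d = (r_2-1)/g \geq 1$ and $c, d$ are taken coprime by construction.
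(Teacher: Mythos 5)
Your proof is correct, but it closes the argument by a genuinely different route than the paper. Both proofs share the same opening moves: $g$ divides $v-1=2^n$, so $g$ is a power of $2$; Theorem \ref{thm:g=1isPencil} disposes of $g=1$; and for $g\geq 2$ the factorization $v-1=(c+d)g$ forces $c+d$ to be an even power of $2$, whence coprimality makes $c$ and $d$ both odd. From there the paper works with the counting identity (\ref{e1r1r1-1+e2r2r2-1}), divided by $g$ to get equation (\ref{eq2}), and derives its contradiction from the parity of $e_1+e_2=2^n+1$: exactly one of $e_1,e_2$ is odd, so the left side of (\ref{eq2}) is odd while the right side is even. You never touch that identity or the parity of the $e_i$; instead you finish with the Section 3 relations (\ref{xdplusyc}) and (\ref{xplusy}): since $c\equiv d\equiv 1 \pmod{2}$, you get $xd+yc\equiv x+y=g\equiv 0\pmod 2$, contradicting $xd+yc=2\lambda-1$. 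Your version is shorter and arguably cleaner, at the cost of leaning on the integrality of $x$ and $y$ established in Section 3, whereas the paper's argument is self-contained from the basic two-way counting. One small caveat: you justify $d\geq 1$ (equivalently $r_2\geq 2$, which is what excludes the degenerate case $g=2^n$, $c+d=1$) by appeal to the paper's conventions; the paper instead spends its first sentence actually proving this, observing that $r_2=1$ would force $r_1=v$ and hence repeated blocks, which are disallowed. Adding that one line would make your write-up airtight rather than convention-dependent.
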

\begin{proof}
	Note right at the beginning that the case $r_2 = 1$ leads to $r_1 = v$ forcing repeated blocks that are disallowed by 
	the stipulations on a Ryser design. Equation (\ref{e1r1r1-1+e2r2r2-1}) and division by $g$, then obtains:
	\begin{equation}\label{eq1}
	e_1(gc + 1)c + e_2(gd + 1)d = \lambda [g(c + d) + 1](c + d)
	\end{equation}
	By Theorem \ref{thm:g=1isPencil}$~~$ we are done if $g = 1$ and hence we must assume that $g \neq 1$. Since $g$ must divide $v - 1 = 2^n$ and $g \neq v - 1$, we have: $g = 2^m$ where $1 \leq m \leq n - 1$ is an integer. Let $k = m - n$. Then equation (\ref{eq1}) reduces to: 
	\begin{equation}\label{eq2}
	e_1(2^mc + 1)c + e_2(2^md + 1)d = \lambda [2^n + 1]2^k
	\end{equation}
	Here, $k$ is a positive integer showing that the right side is an even number. Since $c + d = 2^k$, it follows that $c, d$ have the same parity and since their g.c.d. is $1$, $c$ and $d$ must be both odd. On the other hand, $e_1 + e_2 = 2^n + 1$. So exactly one of $e_1$ and $e_2$ is odd say $e_1$ is odd and $e_2$ is even. On the 
	left side of equation (\ref{eq2}), the first summand is odd while the second is even forcing the right side to be odd, which is impossible.
\end{proof}

\end{document}